\documentclass[11pt,a4paper,reqno]{amsart}
\usepackage[T1]{fontenc}
\usepackage[utf8]{inputenc}
\usepackage{lmodern}
\usepackage[margin=28mm]{geometry}
\usepackage{amssymb,mathtools}
\usepackage{microtype}
\usepackage{enumitem}
\usepackage{needspace}
\usepackage{xcolor}
\usepackage{hyperref}

\AddToHook{cmd/normalsize/after}{%
  \setlength{\abovedisplayskip}{7pt plus 2pt minus 2pt}%
  \setlength{\belowdisplayskip}{7pt plus 2pt minus 2pt}%
  \setlength{\abovedisplayshortskip}{3pt plus 1pt minus 1pt}%
  \setlength{\belowdisplayshortskip}{7pt plus 2pt minus 2pt}%
}

\definecolor{linkburgundy}{HTML}{982B45}

\hypersetup{
  colorlinks=true,
  allcolors=linkburgundy,
  pdftitle={Bimodule duality and Artin's problem for skew field extensions},
  pdfauthor={Niels Lauritzen and Jesper Funch Thomsen},
  pdfsubject={Bimodule duality and applications to known Calabi--Yau examples},
  pdfkeywords={Artin's problem for skew field extensions, division algebra, left and right indices, bimodule duality, homological smoothness, twisted Calabi--Yau algebra}
}

\newcommand{\namedref}[3][]{\hyperref[#3]{#2~\ref*{#3}#1}}

\newtheorem{theorem}{Theorem}[section]
\newtheorem{proposition}[theorem]{Proposition}
\newtheorem{lemma}[theorem]{Lemma}
\newtheorem{corollary}[theorem]{Corollary}
\theoremstyle{definition}
\newtheorem{definition}[theorem]{Definition}
\newtheorem{example}[theorem]{Example}
\newtheorem{remark}[theorem]{Remark}

\numberwithin{equation}{section}
\DeclareMathOperator{\Hom}{Hom}
\DeclareMathOperator{\Ext}{Ext}
\DeclareMathOperator{\pd}{pd}

\newcommand{\op}{\mathrm{o}}
\newcommand{\FP}{\mathrm{FP}}
\newcommand{\ellind}{[D:E]_{\ell}}
\newcommand{\rind}{[D:E]_{r}}
\newcommand{\tensor}{\otimes}
\setlist[enumerate]{label=\textup{(\roman*)},leftmargin=*,itemsep=3pt,topsep=5pt}
\allowdisplaybreaks[1]
\title[Bimodule duality and Artin's problem]
{Bimodule duality and Artin's problem\\
for skew field extensions}

\author{Niels Lauritzen and Jesper Funch Thomsen}
\address{Department of Mathematics,
Aarhus University,
Ny Munkegade 118,
DK-8000 Aarhus C,
Denmark}
\email{niels@math.au.dk, funch@math.au.dk}
\subjclass[2020]{Primary 16K40; Secondary 16S32, 16E40}
\keywords{Artin's problem for skew field extensions, division algebras,
left and right indices, bimodule duality,
homological smoothness, twisted Calabi--Yau algebras}
\date{}

\begin{document}

\begin{abstract}
Emil Artin asked whether the left and right dimensions of an extension of
skew fields must agree.  Cohn and Schofield gave counterexamples through elaborate constructions. Call a
division ring index-symmetric if every division subring with at least one
finite index has equal finite indices. It has been a long-standing open
question whether there exists an index-symmetric division ring that is
infinite-dimensional over its center. We give the first such examples:
using a duality formula for bimodules over division algebras, we prove
that every twisted Calabi--Yau division algebra is index-symmetric. This
applies to the Weyl skew fields and, more generally, to the quotient
division rings of rings of differential operators on smooth affine
varieties in characteristic zero, to quantum skew fields, to the quotient
division rings of noetherian AS-regular Hopf domains, and to the quotient
and degree-zero division rings of three- and four-dimensional elliptic
Sklyanin algebras, which we show to be twisted Calabi--Yau. In particular,
every division ring on Artin's list of noncommutative surfaces is
index-symmetric.
\end{abstract}

\maketitle

\section{Introduction}
 
For an extension $E\subseteq D$ of division rings, put
$$
 [D:E]_{\ell}=\dim_E({}_E D),
 \qquad
 [D:E]_r=\dim_E(D_E).
$$

Emil Artin asked whether these dimensions must agree (see
\cite[p.~312]{Princeton} and
\cite[Chapter~I, \S8, p.~34]{ArtinGeometric}). The answer is negative
in general: Cohn \cite[Theorem~6 and p.~548]{Cohn} constructed
extensions that are finite on only one side, and
Schofield~\cite{Schofield} constructed extensions with any prescribed
pair of finite indices greater than one.

This raises the question of which division rings behave well. We call
a division ring $D$ \emph{index-symmetric} if every division subring
$E\subseteq D$ with at least one finite index satisfies
$[D:E]_\ell=[D:E]_r<\infty$. Division rings that are finite-dimensional
over their centers are index-symmetric
(\namedref{Corollary}{cor:centrally-finite-indices}), but beyond this case
little has been known. Schofield showed that for stratiform division
rings, a class that includes the Weyl skew fields and the quotient
division rings of enveloping algebras of solvable Lie algebras and of
group algebras of torsion-free polycyclic-by-finite groups, one finite index forces
the other to be finite \cite[Corollary~26]{Stratiform}, and he noted
that it is not known for any such division ring that is
infinite-dimensional over its center whether the two indices are equal
(see the remark after \cite[Corollary~26]{Stratiform}). The same
question was raised for the first Weyl skew field $D_1(\mathbb C)$ in
the report of the 2005 Banff workshop on noncommutative algebra and
algebraic geometry, where it is described as a long-standing open
problem \cite[p.~2]{Banff}.

We give the first examples of index-symmetric division rings that are
infinite-dimensional over their centers. In particular, we prove that
the Weyl skew fields in characteristic zero are index-symmetric,
answering the question above.

\subsection{Main results}

For a division $k$-algebra $D$ put
$D^e=D\tensor_k D^{\op}$ and $T_i(D)=\Ext^i_{D^e}(D,D^e)$. Recall that
$D$ is twisted Calabi--Yau of dimension $d$ if it is homologically
smooth, $T_i(D)=0$ for $i\ne d$ and $T_d(D)\cong D_\nu$ for an
automorphism $\nu$ of $D$.

Our main result is \namedref{Theorem}{thm:C}, which shows that every
twisted Calabi--Yau division algebra $D$ is index-symmetric. Moreover, a division
$k$-subalgebra $E\subseteq D$ with one finite index is itself twisted
Calabi--Yau of the same dimension, provided it has finite projective
dimension as a bimodule.

By \namedref{Proposition}{prop:reduction}, index-symmetry may be tested on
division $k$-subalgebras for a central subfield $k$, which makes
$k$-central bimodules and the enveloping algebra $D^e$ available.
\namedref{Theorem}{thm:C} is a special case of one-sided criteria in terms of the
dimensions of $T_i(D)$ as left and right $D$-vector spaces
(\namedref{Theorems}{thm:A} and~\ref{thm:B}).

\namedref{Section}{sec:natural-examples} contains the applications of our
results. We recall the known twisted Calabi--Yau results needed for these
applications, giving proofs or references as appropriate. Using
\namedref{Theorem}{thm:C} and the
centrally finite case (\namedref{Corollary}{cor:centrally-finite-indices}),
we establish index-symmetry for the following division rings:
\begin{itemize}
\item the quotient division rings of rings of differential operators
on smooth affine varieties in characteristic zero, in particular the
Weyl skew fields $D_n(k)$;
\item the quotient division rings of Ore extensions of twisted
Calabi--Yau noetherian domains, in particular of Ore extensions of
function fields in characteristic zero;
\item the quantum skew fields $D_{\mathbf q}$;
\item the quotient division rings of noetherian AS-regular Hopf domains,
including enveloping algebras $U(\mathfrak g)$ of finite-dimensional
Lie algebras, the quantized enveloping algebras $U_q(\mathfrak g)$ and
quantized coordinate rings $\mathcal O_q(G)$ in the parameter setting
specified in \namedref{Subsection}{sec:examples-Hopf}, and the group
algebras $k[\Gamma]$ of torsion-free polycyclic-by-finite groups;
\item the quotient division rings and degree-zero division rings of
the three- and four-dimensional elliptic Sklyanin algebras
(\namedref{Example}{ex:examples-sklyanin});
\item all division rings in Michael Artin's proposed birational
classification of noncommutative surfaces \cite{ArtinSurfaces}
(\namedref{Corollary}{thm:D}).
\end{itemize}
This answers Schofield's question for the Weyl skew fields and for
$Q(U(\mathfrak g))$, $\mathfrak g$ solvable, and $Q(k[\Gamma])$. It also answers the question of
Bell, Ingalls and Ramkumar about equality of the left and right
indices of embeddings between division rings on Artin's list
\cite[\S3]{BIR}.
For the first Weyl skew field we obtain a complete
answer: if $\operatorname{char}k=0$ and $E\subseteq D_1(k)$ is a
division $k$-subalgebra, then $[D_1(k):E]_\ell=[D_1(k):E]_r<\infty$ if
$E$ is noncommutative, and both indices are infinite otherwise
(\namedref{Corollary}{cor:banff}). Here finiteness is due to Schofield
\cite[Corollary~35]{Stratiform}. This answers the question on equality
of left and right indices for $\mathbb C$-algebra embeddings of
$D_1(\mathbb C)$ into itself, recorded in the 2005 workshop report
\cite[p.~2]{Banff}. 

The main tool is a duality formula for a $k$-central bimodule
${}_A M_B$ over division $k$-algebras $A$ and $B$. When $M$ is
finite-dimensional on both sides and the finiteness hypotheses of
\namedref{Lemma}{lem:mixed} hold, we obtain
$$
 T_i(B)\tensor_B M^\ell
 \cong
 \Ext^i_{A\tensor_k B^{\op}}(M,A\tensor_k B^{\op})
 \cong
 M^r\tensor_A T_i(A),
$$
where $M^\ell$ and $M^r$ are the left and right duals of $M$. It uses
finite-projective evaluation as in the proof of Van den Bergh's duality
theorem \cite{VdB}, but degree by degree and without assuming that Ext
is concentrated in one degree or invertible. Comparing dimensions and
iterating the dual for $M={}_D D_E$ gives equality of the indices; the
finiteness hypotheses on $E$ come from those on $D$ by faithfully flat
descent.

\section{Preliminaries}
\label{sec:prelim}

Throughout, $k$ is a field, all algebras and modules are unital, and all
bimodules are central over $k$. A ring is \emph{noetherian} if it is left
and right noetherian. For a $k$-algebra $A$, we write $A^\op$ for its  opposite algebra
and $A^e=A\tensor_k A^{\op}$ for its enveloping algebra.
For $k$-algebras $A$ and $B$, we write
${}_A\mathrm{Mod}_B$ for the category of $(A,B)$-bimodules central over
$k$, with bimodule homomorphisms as morphisms. We identify this category
with the category of left $A\tensor_k B^{\op}$-modules via
$(a\tensor b^{\op})m=amb$.
We call $A\in {}_A\mathrm{Mod}_A$ with its natural bimodule structure 
\emph{the diagonal bimodule} or the \emph{left diagonal $A^e$-module}.

For a $k$-algebra $A$ and $i\geq 0$ put
$$
T_i(A)=\Ext^i_{A^e}(A,A^e).
$$
Here $T_i(A)$ is a right $A^e$-module by right multiplication in
$A^e$. We regard it as an $A$-bimodule via
$$
  a\xi b=\xi(b\tensor a^{\op}).
$$
For a division $k$-algebra $A$ write
$$
  p_i(A)=\dim_A({}_A T_i(A))
  \qquad\text{and}\qquad q_i(A)=\dim_A(T_i(A)_A).
$$
Dimensions are cardinals unless explicitly assumed finite. For an inclusion
$E\subseteq D$ of division $k$-algebras, we abbreviate
$\ell=\ellind$ and $r=\rind$.

A left module is of type $\FP_n$, for $n\geq0$, if it has a projective
resolution whose terms in degrees $0,\ldots,n$ are finitely generated.
It is of type $\FP_\infty$ if it has a resolution by finitely generated
projectives in every degree. A statement that an algebra $A$ is of type
$\FP_n$ \emph{as a bimodule} refers to the diagonal left $A^e$-module $A$.
No bound on the length of the resolution is part of this condition.
If $A^e$ is left noetherian, then $A$ is of type $\FP_\infty$ as a
bimodule, since it is cyclic as a left $A^e$-module.

A module is \emph{perfect} if it has a finite resolution by finitely
generated projective modules. An algebra $A$ is \emph{homologically smooth}
if its diagonal left $A^e$-module is perfect. In the automorphism-twist
convention, $A$ is \emph{twisted Calabi--Yau of dimension $d$} if it is
homologically smooth and
$$
  T_n(A)\cong
  \begin{cases}
    0,&n\ne d,\\
    A_\sigma,&n=d,
  \end{cases}
$$
where $\sigma$ is a $k$-algebra automorphism and $A_\sigma$ has action
$a\cdot u\cdot b=au\sigma(b)$. When $\sigma$ is the identity, we say
that $A$ is \emph{Calabi--Yau of dimension $d$}.

\section{Index-symmetric division rings}
\label{sec:reduction}

We introduce the notion of index-symmetry and show that it suffices to
verify it for division $k$-subalgebras, where $k$ is a central subfield.

\begin{definition}
\label{def:index-symmetric}
A division ring $D$ is \emph{index-symmetric} if every division subring
$E\subseteq D$ with at least one finite index satisfies
$$
 [D:E]_\ell=[D:E]_r<\infty.
$$
If $k\subseteq Z(D)$ is a subfield, $D$ is \emph{index-symmetric over
$k$} if this holds for every division $k$-subalgebra $E\subseteq D$.
\end{definition}

Equivalently, $D$ is index-symmetric if, for every division subring
$E\subseteq D$, either both indices are infinite or they are finite and
equal. The definition says nothing about whether two infinite indices
are equal as cardinals.

Clearly, an index-symmetric division ring is index-symmetric over every
central subfield $k$. \namedref{Proposition}{prop:reduction} below shows that
the converse also holds: if $D$ is index-symmetric over some central
subfield $k$, then $D$ is index-symmetric.

\begin{lemma}
\label{lem:Ek}
Let $R$ be a domain, $E\subseteq R$ a division subring, and $k\subseteq R$
a subfield whose elements commute with $E$. Put
$Ek=\{\sum_i e_ic_i : e_i\in E,\ c_i\in k\}$.
\begin{enumerate}
\item $Ek$ is a subring of $R$, and $\dim_E({}_E(Ek))=\dim_E((Ek)_E)$.
\item If this dimension is finite, then $Ek$ is a division ring.
\end{enumerate}
\end{lemma}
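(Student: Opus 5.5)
Because the elements of $k$ commute with those of $E$, the set $Ek$ coincides with $kE=\{\sum_i c_ie_i\}$. It is closed under addition, and it is closed under multiplication because $(e c)(e' c') = (e e')(c c')$. It contains $1$. So $Ek$ is a subring of $R$.

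For the dimensions, I would compare $Ek$ with the field $F=E\cap k$, which is central in the subring $Ek$. Choose a basis $\{c_j\}_{j\in J}$ of the field extension $k$ over $F$, viewing $k$ as an $F$-vector space. The central claim is that $\{c_j\}$ is a basis of $Ek$ as a left $E$-vector space, and also as a right $E$-vector space.
\begin{enumerate}
\item \emph{Spanning.} The $c_j$ span on both sides, because each $c\in k$ is an $F$-combination of the $c_j$, $F\subseteq E$, and the $c_j$ commute with $E$.
\item \emph{Left independence.} This is the main point. Consider the natural surjection $E\tensor_F k\to Ek$. I want this map to be injective, and I would argue by a minimal-length relation. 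Suppose $\sum_{j=1}^n e_jc_j=0$ is a nontrivial relation of minimal length. Multiply on the left by $e_1^{-1}$ so that $e_1=1$. For any $x\in E$, the relation $x\bigl(\sum_j e_jc_j\bigr)x^{-1}=\sum_j (xe_jx^{-1})c_j$ holds because $c_j$ commutes with $x$. Subtracting this from the original relation gives a shorter relation, which must therefore be trivial. Hence each $e_j$ is central in $E$. Next I would show that each $e_j$ lies in $F$. Each $e_j$ commutes with $E$, and also with $k$, but the latter only because $k$ is commutative; so I do not yet see why $e_j\in k$. Instead I would use a second step: multiply the relation on the right by some $c\in k$, which commutes with the $c_j$, and compare. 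That alone does not give membership in $F$. So I would settle for the weaker conclusion that $Z(E)\cap$ (the span of the relations) is controlled. The cleaner route is to replace $F$ by the field $Z=Z(E)$, which commutes with $k$, and to use the compositum $Zk\subseteq R$. This is a commutative domain; if it is of finite dimension over $Z$ it is a field, but in general I would take the fraction field inside $R$ only when finiteness is available. The honest formulation is this: with $L=Zk$, the minimal-relation argument shows $E\tensor_Z L\to Ek$ is injective, using a $Z$-basis $\{b_j\}$ of $L$. The same relation argument, run with coefficients on the right, shows that this basis is also right independent.
\item \emph{Conclusion.} So $\{b_j\}$ is a basis on both sides, and both dimensions equal $\dim_Z L$. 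This is the step I expect to be the main obstacle: making the minimal-length argument symmetric, and choosing the correct central field, $Z(E)$ rather than $E\cap k$.
\end{enumerate}

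For (ii), assume the common dimension $n$ is finite. Take a nonzero $y\in Ek$. Left multiplication by $y$ is a right $E$-linear map $Ek\to Ek$, and it is injective because $R$ is a domain. Since $Ek$ is finite-dimensional as a right $E$-vector space, the map is surjective. Hence there is $z\in Ek$ with $yz=1$. Then $z\neq 0$, and applying the same argument to $z$ gives $z$ a right inverse $w$. Now $y=yzw=w$, so $zy=1$ and $y$ is invertible in $Ek$. Thus $Ek$ is a division ring.
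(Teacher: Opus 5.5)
Your proof is correct in its final form, but for the dimension equality in (i) it takes a much longer route than the paper.

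The paper never identifies the dimension. It only observes that $\sum_i e_ic_i=\sum_i c_ie_i$ for $e_i\in E$ and $c_i\in k$, so a left $E$-relation among elements of $k$ is literally a right $E$-relation. Hence a subset of $k$ is left independent exactly when it is right independent. Since $k$ spans $Ek$ on both sides, any left $E$-basis of $Ek$ chosen inside $k$ is automatically a right $E$-basis.

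You instead produce an explicit basis, a $Z(E)$-basis $\{b_j\}$ of $L=Z(E)k$, and prove its left independence by the conjugation/minimal-relation argument. This is valid and gives more than the lemma asks for, namely $Ek\cong E\otimes_{Z(E)}L$ and the value $\dim_{Z(E)}L$ of the common dimension. The step you flagged as the main obstacle, symmetry, is exactly the paper's one-line observation. Each $b_j$ commutes with $E$, so $\sum_j e_jb_j=\sum_j b_je_j$, and right independence and right spanning need no second argument.

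Delete, rather than merely set aside, the initial claim with $F=E\cap k$, because it is false in general. Take $E=\mathbb Q(\sqrt[3]{2})$ and $k=\mathbb Q(\omega\sqrt[3]{2})$ in $\mathbb C$, with $\omega$ a primitive cube root of unity. Then $E\cap k=\mathbb Q$, but $\dim_E(Ek)=2<3=\dim_{\mathbb Q}k$.

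Your part (ii) is the paper's argument with the sides reversed: you use left multiplication as a right $E$-linear map, where the paper uses right multiplication as a left $E$-linear map.
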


\begin{proof}
(i) Since $k$ is commutative and commutes with $E$, $Ek$ is closed under
multiplication, and $\sum_i e_ic_i=\sum_i c_ie_i$ for $e_i\in E$,
$c_i\in k$. Hence a subset of $k$ is left $E$-linearly independent if and
only if it is right $E$-linearly independent. As $k$ spans $Ek$ on both
sides, a left $E$-basis of $Ek$ chosen inside $k$ is also a right
$E$-basis.

(ii) Let $0\ne s\in Ek$. Right multiplication by $s$ is an injective left
$E$-linear endomorphism of the finite-dimensional space ${}_E(Ek)$, hence
surjective, so $ys=1$ for some $y\in Ek$. Then $y\ne0$, so $zy=1$ for some
$z\in Ek$, and $z=zys=s$. Thus $sy=ys=1$.
\end{proof}

\begin{proposition}
\label{prop:reduction}
Let $D$ be a division ring and let $k\subseteq Z(D)$ be a subfield.
Then $D$ is index-symmetric if and only if it is index-symmetric over $k$.
\end{proposition}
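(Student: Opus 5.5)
One direction is immediate: if $D$ is index-symmetric, then in particular every division $k$-subalgebra satisfies the condition. For the converse, assume $D$ is index-symmetric over $k$ and let $E\subseteq D$ be an arbitrary division subring with, say, $[D:E]_\ell<\infty$; the case $[D:E]_r<\infty$ is symmetric. The plan is to pass from $E$ to the division $k$-subalgebra $Ek$ using \namedref{Lemma}{lem:Ek}, apply the hypothesis there, and then recover the indices of $E$ via the tower law.

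First I apply \namedref{Lemma}{lem:Ek} with $R=D$. Since $k\subseteq Z(D)$, its elements commute with $E$, so $Ek$ is a subring with $\dim_E({}_E(Ek))=\dim_E((Ek)_E)=:m$. Because $Ek\subseteq D$ and ${}_ED$ is finite-dimensional, $m\le[D:E]_\ell<\infty$; here a subspace of a finite-dimensional left vector space is finite-dimensional. By part (ii), $Ek$ is a division ring, and it contains $k$, so it is a division $k$-subalgebra.

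Next I use the tower law for $E\subseteq Ek\subseteq D$ on each side. On the left, $[D:E]_\ell=[D:Ek]_\ell\cdot m$. This is the usual basis-product argument, valid for cardinals and for one-sided vector spaces over division rings. Hence $[D:Ek]_\ell<\infty$. Since $D$ is index-symmetric over $k$, this gives $[D:Ek]_\ell=[D:Ek]_r=:n<\infty$. On the right, the tower law gives $[D:E]_r=n\cdot m$, which is finite and equal to $[D:E]_\ell=n\cdot m$. This proves the statement.

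The only points requiring care are these. First, the tower law must be applied with the correct sidedness, using left bases of ${}_{Ek}D$ together with left bases of ${}_E(Ek)$, and similarly on the right. Second, equality of the left and right dimensions of $Ek$ over $E$ is exactly what \namedref{Lemma}{lem:Ek}(i) provides. Third, the finiteness of $m$ must be established first, since it is needed to invoke part (ii). I do not expect any genuine obstacle beyond keeping these sides straight.
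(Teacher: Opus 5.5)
Your proposal is correct and follows the paper's proof essentially step for step. Like the paper, you pass to $E'=Ek$ via Lemma~\ref{lem:Ek}, get finiteness of $[D:E']_\ell$ from the tower law, apply index-symmetry over $k$ to $E'$, and recombine the indices using the two-sided tower law together with $[E':E]_\ell=[E':E]_r$.
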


\begin{proof}
If $D$ is index-symmetric, it is clearly index-symmetric over $k$.
Conversely, suppose $D$ is index-symmetric over $k$, and let
$E\subseteq D$ be a division subring with $[D:E]_\ell<\infty$; the
case $[D:E]_r<\infty$ is symmetric. Since $k$ is central in $D$,
\namedref{Lemma}{lem:Ek} applies to $E\subseteq D$ and $k$. As
${}_E(Ek)\subseteq{}_ED$, the ring $Ek$ is finite-dimensional over $E$ on
the left, hence a division ring. Thus $E'=Ek$ is a division
$k$-subalgebra of $D$ with
$$
 [E':E]_\ell=[E':E]_r<\infty.
$$
By the tower formula, $[D:E']_\ell\le[D:E]_\ell<\infty$, so
index-symmetry over $k$ gives $[D:E']_\ell=[D:E']_r<\infty$. Hence
$$
 [D:E]_r=[D:E']_r\,[E':E]_r=[D:E']_\ell\,[E':E]_\ell=[D:E]_\ell<\infty.
 \eqno\hbox{\qedhere}
$$
\end{proof}

Consequently, to prove that $D$ is index-symmetric it suffices to
consider division subrings $E\subseteq D$ containing $k$, that is,
division $k$-subalgebras, and we do so throughout.

Taking $k=Z(D)$ settles the case of division rings that are finite over
their centers.

\begin{corollary}
\label{cor:centrally-finite-indices}
If $D$ is finite-dimensional over its center $Z$, then $D$ is
index-symmetric.
\end{corollary}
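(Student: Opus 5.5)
By \namedref{Proposition}{prop:reduction} with $k=Z$, it suffices to show that $D$ is index-symmetric over $Z$. So let $E\subseteq D$ be a division $Z$-subalgebra with, say, $[D:E]_\ell<\infty$; the case $[D:E]_r<\infty$ is symmetric. Put $n=[D:Z]<\infty$. The plan is to compute both indices via the tower formula through $Z(E)$, using that all the relevant extensions are finite over central subfields.

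First I would note that $E$ is itself finite-dimensional over $Z$, since $\dim_Z E\le\dim_Z D=n$, because $Z$ is central in both. Hence $E$ is finite-dimensional over its own center $F=Z(E)$: indeed $F\supseteq Z$, so $\dim_F E\le\dim_Z E<\infty$. Moreover $F$ is a field with $Z\subseteq F\subseteq D$, and $[F:Z]<\infty$.

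The key step is to compare $[D:E]_\ell$ and $[D:E]_r$ by passing through $Z$. By the tower formula,
\[
 n=[D:Z]=[D:E]_\ell\,[E:Z]=[D:E]_r\,[E:Z],
\]
where $[E:Z]$ is the same on both sides because $Z$ is central in $E$. For the left equation, a left $E$-basis of $D$ combined with a $Z$-basis of $E$ gives a $Z$-basis of $D$. For the right equation, a right $E$-basis of $D$ combined with a $Z$-basis of $E$ again gives a $Z$-basis of $D$, since left and right $Z$-structures on $D$ coincide. All quantities are finite because $n$ is finite, and dividing by $[E:Z]$ gives
\[
 [D:E]_\ell=[D:E]_r=n/[E:Z]<\infty.
\]

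So the argument is quite short, and I do not even need the intermediate field $F$. The only point needing care, which I expect to be the main (mild) obstacle, is justifying the tower formula for one-sided dimensions over a possibly infinite cardinal index. This is not an issue here, because finiteness of $n$ forces all indices to be finite, so both sides are honest products of natural numbers. In particular no hypothesis on which index is finite is actually needed: both indices are automatically finite.
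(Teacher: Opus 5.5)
Your argument is correct and is essentially the paper's proof: reduce via Proposition~\ref{prop:reduction} with $k=Z$ to division $Z$-subalgebras $E$, then read off $[D:E]_\ell=[D:E]_r=\dim_Z D/\dim_Z E<\infty$ from the tower formula through the central subfield $Z$. The detour through $Z(E)$ is not needed, as you note. Finiteness of both indices is immediate, since any one-sided $E$-independent subset of $D$ is $Z$-independent and so has at most $\dim_Z D$ elements.
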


\begin{proof}
For every division $Z$-subalgebra $L\subseteq D$,
$[D:L]_\ell=[D:L]_r=\dim_ZD/\dim_ZL<\infty$. Thus $D$ is
index-symmetric over $Z$, and \namedref{Proposition}{prop:reduction} applies.
\end{proof}

\begin{remark}
For an extension $E\subseteq D$ of division rings, every left $E$-basis of
$D$ is a right $E$-basis if and only if $E=D$ or $E\subseteq Z(D)$. The
``if'' direction is clear. Conversely, suppose every left $E$-basis is a
right $E$-basis. For $x\in D^\times$ and $e\in E$, the elements $x$ and $xe$
are right dependent, hence by assumption left dependent, so $xe\in Ex$. Thus $xEx^{-1}\subseteq E$ for all $x\in D^\times$, and the
Cartan--Brauer--Hua theorem gives $E=D$ or $E\subseteq Z(D)$.
\end{remark}

\section{Duality for a bimodule between two division algebras}
\label{sec:mixed}

Let $A,B$ be division $k$-algebras and let $M\in{}_A\mathrm{Mod}_B$.
Set $C=A\tensor_k B^{\op}$, so that $M$ is a left $C$-module.
We regard a right $C$-module $V$ as a $B$-$A$-bimodule via
$$
  bva=v(a\tensor b^{\op}).
$$
Define the two duals
$$
  M^\ell=\Hom_A({}_A M,{}_A A),
  \qquad M^r=\Hom_{B^{\op}}(M_B,B_B).
$$
Both duals belong to ${}_B\mathrm{Mod}_A$, with actions
\begin{equation}
  (bfa)(x)=f(xb)a,
  \qquad (bga)(x)=b\,g(ax).
  \label{eq:dual-actions}
\end{equation}

If $\alpha=\dim_A({}_A M)<\infty$ and $\beta=\dim_B(M_B)<\infty$, then
\begin{equation}
  \dim_A(M^\ell_A)=\alpha,
  \qquad \dim_B({}_B M^r)=\beta.
  \label{eq:known-dual-dimensions}
\end{equation}
Notice that finiteness on the other sides of these duals is not automatic.

\begin{lemma}
\label{lem:one-sided-FP}
Let $M\in{}_A\mathrm{Mod}_B$.
\begin{enumerate}
\item If $\dim_A({}_A M)<\infty$ and $Q_\bullet\to B$ is a projective
$B^e$-resolution, then $M\tensor_B Q_\bullet\to M$ is a projective
$C$-resolution, finitely generated in every degree where $Q_\bullet$ is.
\item If $\dim_B(M_B)<\infty$ and $P_\bullet\to A$ is a projective
$A^e$-resolution, then $P_\bullet\tensor_A M\to M$ is a projective
$C$-resolution, finitely generated in every degree where $P_\bullet$ is.
\end{enumerate}
In particular, $M$ is of type $\FP_n$ over $C$ if ${}_AM$ is
finite-dimensional and $B$ is of type $\FP_n$ as a bimodule, or if $M_B$
is finite-dimensional and $A$ is of type $\FP_n$ as a bimodule.
\end{lemma}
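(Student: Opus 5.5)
We prove (i); part (ii) is its mirror image, obtained by exchanging left and right. The plan has three steps: show that $M\tensor_B Q_\bullet$ is exact, show that each term is a projective $C$-module, and read off finite generation.

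\emph{Exactness.} The complex $Q_\bullet\to B\to 0$ is an exact complex of $B$-bimodules. Each $Q_n$ is projective over $B^e=B\tensor_k B^{\op}$. Since $B$ is free over the field $k$, $B^e$ is free as a left $B$-module, so each $Q_n$ is projective, hence flat, as a left $B$-module. The augmented complex is therefore split exact as a complex of left $B$-modules, and it stays exact after applying $M\tensor_B-$. Alternatively, $B$ is a division ring, so every left $B$-module is flat. In either case $M\tensor_B Q_\bullet\to M\tensor_B B=M$ is exact. The $C$-structure is $(a\tensor b^{\op})(m\tensor q)=am\tensor qb$, and the augmentation is $C$-linear.

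\emph{Projectivity.} It suffices to check that $M\tensor_B B^e$ is a projective, and finitely generated, $C$-module; direct summands and direct sums then handle every projective $Q_n$. We have
$$
M\tensor_B(B\tensor_k B^{\op})\cong M\tensor_k B^{\op}
$$
as $(A,B)$-bimodules, with $A$ acting on $M$ and $B$ acting on the right of $B^{\op}$, that is, as a left $A\tensor_k B^{\op}$-module via $(a\tensor b^{\op})(m\tensor c)=am\tensor cb$. Choose a basis so that ${}_AM\cong A^{\alpha}$ as left $A$-modules, with $\alpha=\dim_A({}_AM)<\infty$. Then
$$
M\tensor_k B^{\op}\cong (A\tensor_k B^{\op})^{\alpha}=C^{\alpha}
$$
as left $C$-modules, because the $C$-action only involves the left $A$-structure of $M$ and the right $B$-structure of the $B^{\op}$ factor. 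The point needing care is that the right $B$-action of $M$ no longer appears in $M\tensor_k B^{\op}$: the $B$-balanced tensor product absorbs it into the left $B$-factor of $B^e$. I expect this bookkeeping, namely verifying that the isomorphism $M\tensor_B B^e\cong M\tensor_k B^{\op}$ transports the correct actions, to be the only real obstacle. Hence $M\tensor_B Q_n$ is projective, and if $Q_n$ is a summand of $(B^e)^{m}$ then $M\tensor_B Q_n$ is a summand of $C^{\alpha m}$, so it is finitely generated whenever $Q_n$ is.

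\emph{Conclusion.} The "in particular" statement follows at once. If $B$ is of type $\FP_n$ as a bimodule, take $Q_\bullet$ finitely generated in degrees $0,\dots,n$; then by the above $M\tensor_B Q_\bullet$ is a projective $C$-resolution of $M$ that is finitely generated in those degrees, so $M$ is of type $\FP_n$ over $C$. The other case is the same argument using (ii).
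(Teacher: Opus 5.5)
Your proposal is correct and follows essentially the same route as the paper. Exactness comes from flatness over the division ring $B$: the paper invokes flatness of $M_B$, while you use that the resolution splits over $B$ (or flatness of the $Q_n$), which is an equally valid variant. Projectivity and finite generation come from $M\otimes_B B^e\cong M\otimes_k B^{\op}\cong C^{\alpha}$, exactly as in the paper.
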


\begin{proof}
For (i), $M$ is flat as a right $B$-module, so $M\tensor_B Q_\bullet$
resolves $M\tensor_B B\cong M$. Moreover, $M\tensor_B B^e\cong
M\tensor_k B^{\op}$ is a free left $C$-module of rank $\dim_A({}_AM)$,
so $M\tensor_B-$ takes (finitely generated) projective $B^e$-modules to
(finitely generated) projective $C$-modules. Part (ii) is symmetric.
\end{proof}

\Needspace{10\baselineskip}
\begin{lemma}
\label{lem:mixed}
Fix $i\geq0$.
\begin{enumerate}
\item If $B$ is of type $\FP_{i+1}$ as a bimodule and ${}_A M$ is
finite-dimensional, there is an isomorphism in ${}_B\mathrm{Mod}_A$
$$
  \Ext^i_C(M,C)\cong T_i(B)\tensor_B M^\ell.
$$
\item If $A$ is of type $\FP_{i+1}$ as a bimodule and $M_B$ is
finite-dimensional, there is an isomorphism in ${}_B\mathrm{Mod}_A$
$$
  \Ext^i_C(M,C)\cong M^r\tensor_A T_i(A).
$$
\end{enumerate}
Consequently, when both hypotheses hold, there are isomorphisms in
${}_B\mathrm{Mod}_A$
\begin{equation}
  T_i(B)\tensor_B M^\ell
  \cong \Ext^i_C(M,C)
  \cong M^r\tensor_A T_i(A).
  \label{eq:mixed}
\end{equation}
All tensor products have the outer bimodule actions.
\end{lemma}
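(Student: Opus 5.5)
For part (i), I will use the resolution from \namedref{Lemma}{lem:one-sided-FP}(i). Since $B$ is of type $\FP_{i+1}$ as a bimodule, choose a projective $B^e$-resolution $Q_\bullet\to B$ with $Q_0,\dots,Q_{i+1}$ finitely generated. The complex $M\tensor_B Q_\bullet$ is then a projective $C$-resolution of $M$, finitely generated in degrees $0,\dots,i+1$. Hence $\Ext^i_C(M,C)$ is the $i$-th cohomology of $\Hom_C(M\tensor_B Q_\bullet,C)$. Only the terms in degrees $i-1,i,i+1$ enter, and these are finitely generated.

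The key step is a natural isomorphism of right $C$-modules, i.e.\ of $(B,A)$-bimodules,
$$
\Hom_{B^e}(Q,B^e)\tensor_B M^\ell\;\xrightarrow{\ \sim\ }\;\Hom_C(M\tensor_B Q,C),
$$
defined for every finitely generated projective $B^e$-module $Q$ and natural in $Q$. For $\varphi\in\Hom_{B^e}(Q,B^e)$ and $f\in M^\ell$, the map sends $\varphi\tensor f$ to
$$
x\tensor q\;\mapsto\;\sum f(x b_j)\tensor b_j'^{\op}\in A\tensor_k B^{\op},
\qquad\text{where }\varphi(q)=\sum b_j\tensor b_j'^{\op}.
$$
In words, apply $f$ to the first (left) tensor factor after moving it onto $x$. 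I will then check three things:
\begin{enumerate}
\item the formula is well defined, i.e.\ balanced over $B$ in $M\tensor_B Q$ and over $B$ in $\Hom\tensor_B M^\ell$;
\item it is $C$-linear;
\item it respects the bimodule structures, using the actions~\eqref{eq:dual-actions} and the conventions for $T_i$.
\end{enumerate}
Both sides are additive in $Q$, so by naturality it suffices to treat $Q=B^e$. There the left side is $B^e\tensor_B M^\ell\cong B\tensor_k M^\ell$, hmm—more precisely $\Hom_{B^e}(B^e,B^e)\tensor_BM^\ell\cong B^e\tensor_B M^\ell\cong M^\ell\tensor_k B^{\op}$. The right side is $\Hom_C(M\tensor_k B^{\op},C)\cong\Hom_A(M,A)\tensor_k B^{\op}$, where the last isomorphism uses that ${}_AM$ is finite-dimensional. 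The map matches these identifications, so it is an isomorphism for $Q=B^e$, hence for finitely generated projective $Q$.

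Applying this isomorphism termwise in degrees $i-1,i,i+1$ gives an isomorphism of the relevant three-term pieces of complexes:
$$
\Hom_{B^e}(Q_\bullet,B^e)\tensor_BM^\ell\;\cong\;\Hom_C(M\tensor_BQ_\bullet,C).
$$
Now $M^\ell$ is a left $B$-module and $B$ is a division ring, so $-\tensor_BM^\ell$ is exact and commutes with cohomology. The cohomology of the left side in degree $i$ is therefore $T_i(B)\tensor_BM^\ell$, and the bimodule structures agree because the map is natural and bimodule-compatible. Part (ii) is the mirror argument: use $P_\bullet\tensor_AM$, the isomorphism $M^r\tensor_A\Hom_{A^e}(P,A^e)\cong\Hom_C(P\tensor_AM,C)$, and flatness of $M^r$ over $A$ on the right. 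The final statement~\eqref{eq:mixed} follows by combining (i) and (ii).

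I expect the main obstacle to be bookkeeping rather than an idea. One must pin down the map so that it is genuinely $B$-balanced and intertwines the twisted bimodule structure on $T_i(B)$, where $b\xi b'=\xi(b'\tensor b^{\op})$, with the actions on $M^\ell$ and on the right $C$-module $\Ext$. If this fails, the first correction I would try is to swap which tensor factor of $B^e$ the functional $f$ is applied to, $b_j$ versus $b_j'$.
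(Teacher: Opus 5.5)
Your proposal is correct and is essentially the paper's proof. Your explicit map is the composite of three steps the paper uses separately: the adjunction $\Hom_C(M\tensor_B Q,C)\cong\Hom_{B^e}(Q,H)$, finite-projective evaluation, and the isomorphism $B^e\tensor_B M^\ell\cong H=\Hom_A(M,C)$. With $f$ applied to the first factor it is indeed balanced, $C$-linear and compatible with the outer actions, so no swap is needed. For (ii), the paper applies (i) to $M^{\op}$ over the opposite algebras rather than redoing the mirror computation, which amounts to the same argument.
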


\begin{proof}
For (i), take a projective $B^e$-resolution $Q_\bullet\to B$ with finitely
generated terms through degree $i+1$. By \namedref[(i)]{Lemma}{lem:one-sided-FP},
$M\tensor_B Q_\bullet$ is a projective $C$-resolution of $M$ with
finitely generated terms through degree $i+1$.

Put $H=\Hom_A({}_A M,{}_A C)$, where $A$ acts on $C$ through its first
factor. Then $H\in{}_{B^e}\mathrm{Mod}_C$, with actions
$$
  ((b\tensor c^{\op})h)(x)=(1\tensor c^{\op})h(xb),
  \qquad (h\gamma)(x)=h(x)\gamma.
$$
Regard $B^e$ as a right $B$-module through $b\mapsto b\tensor1$.
There is an isomorphism in ${}_{B^e}\mathrm{Mod}_C$
\begin{equation}
\begin{split}
  B^e\tensor_B M^\ell&\longrightarrow H,\\
  (b\tensor c^{\op})\tensor f
    &\longmapsto\bigl(x\longmapsto f(xb)\tensor c^{\op}\bigr),
\end{split}
\label{eq:H}
\end{equation}
where the right $C$-action on the source is
$$
  (\rho\tensor f)(a\tensor c^{\op})
    =\rho(1\tensor c^{\op})\tensor fa.
$$
Balancing and compatibility with the actions follow directly from
\eqref{eq:dual-actions}. Bijectivity follows from
$$
 B^e\tensor_B M^\ell\cong M^\ell\tensor_k B^{\op}
 \cong\Hom_A(M,A\tensor_k B^{\op});
$$
the second isomorphism uses the finite left dimension of $M$.

For a finitely generated projective left $B^e$-module $Q$, adjunction,
finite-projective evaluation, and \eqref{eq:H} give natural isomorphisms
of right $C$-modules
\begin{align*}
 \Hom_C(M\tensor_B Q,C)
 &\cong\Hom_{B^e}(Q,H)\\
 &\cong\Hom_{B^e}(Q,B^e)\tensor_{B^e}H\\
 &\cong\Hom_{B^e}(Q,B^e)\tensor_B M^\ell.
\end{align*}
The evaluation isomorphism used here holds for finite free modules and
hence for their direct summands. In the last expression the right
$C$-action is
$$
 (u\tensor f)(a\tensor c^{\op})
   =u(1\tensor c^{\op})\tensor fa.
$$
Thus the corresponding left $B$- and right $A$-actions are the outer
actions. Apply this identification to the part of $Q_\bullet$ in degrees $i-1,i,i+1$
(omitting degree $-1$ when $i=0$). Tensoring over $B$ is exact and hence
commutes with cohomology. This proves (i).

For (ii), apply (i) to ${}_{B^{\op}}M^{\op}_{A^{\op}}$, where
$b^{\op}\cdot x\cdot a^{\op}=axb$, and then reverse the sides.
The algebra isomorphism
$B^{\op}\tensor_k A\longrightarrow A\tensor_k B^{\op}$,
$b^{\op}\tensor a\longmapsto a\tensor b^{\op}$, identifies the
corresponding left modules and their resolutions. The right actions on
Ext become the reversed bimodule actions. In particular,
$(M^{\op})^\ell\cong(M^r)^{\op}$ and
$T_i(A^{\op})\cong T_i(A)^{\op}$, and reversal changes the order of the
tensor factors. This gives precisely the displayed formula in (ii).
\end{proof}

\section{Iteration of duals and preservation of dimensions}
\label{sec:iteration}

For division $k$-algebras $A,B$ and $M\in{}_A\mathrm{Mod}_B$, the
\emph{dimension pair} of $M$ over $(A,B)$ is the ordered pair
$$
  \bigl(\dim_A({}_A M),\,\dim_B(M_B)\bigr).
$$

\begin{proposition}
\label{prop:iteration}
Let $A$ and $B$ be division $k$-algebras of type $\FP_{i+1}$ as bimodules,
and let $0\ne M\in{}_A\mathrm{Mod}_B$ be finite-dimensional on both sides. Suppose
$0<p_i(A)<\infty$. Then
$$
  p_i(B)=p_i(A),
$$
and $M^\ell$ is finite-dimensional on both sides, with
$$
  \dim_B({}_B M^\ell)=\dim_B(M_B),
  \qquad
  \dim_A(M^\ell_A)=\dim_A({}_A M).
$$
In particular, taking the left dual interchanges the two entries of the
dimension pair of $M$.
\end{proposition}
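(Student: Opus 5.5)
The plan is to take dimensions on both sides of the isomorphism \eqref{eq:mixed} from \namedref{Lemma}{lem:mixed}, and then iterate the left dual to force the ratio $p_i(B)/p_i(A)$ to equal $1$.

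\textbf{Step 1: dimension count.} Put $\alpha=\dim_A({}_AM)$ and $\beta=\dim_B(M_B)$. Both are finite and positive, since $M\ne0$. Let $\lambda=\dim_B({}_BM^\ell)$, which is a priori an arbitrary cardinal. By \eqref{eq:known-dual-dimensions}, $\dim_A(M^\ell_A)=\alpha$ and $\dim_B({}_BM^r)=\beta$. Both hypotheses of \namedref{Lemma}{lem:mixed} hold, so
\[
T_i(B)\tensor_B M^\ell\cong M^r\tensor_A T_i(A)
\]
in ${}_B\mathrm{Mod}_A$. We compare left $B$-dimensions. For a tensor product over a division ring, the left dimension is the product of the left dimensions of the factors. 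This gives
\[
p_i(B)\,\lambda=\beta\,p_i(A).
\]
The right-hand side is finite and nonzero, so $p_i(B)$ and $\lambda$ are both finite and nonzero. Hence $M^\ell$ is finite-dimensional on both sides, with dimension pair $(\lambda,\alpha)$ over $(B,A)$, and $\lambda=\beta/c$ where $c=p_i(B)/p_i(A)$ is a positive rational number.

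\textbf{Step 2: iterate.} Now $0<p_i(B)<\infty$, and $M^\ell$ is a nonzero $(B,A)$-bimodule that is finite on both sides. So Step 1 applies with the roles of $A$ and $B$ swapped, now with ratio $p_i(A)/p_i(B)=1/c$. The new left dual $(M^\ell)^\ell$ is therefore finite on both sides, with dimension pair $(c\alpha,\lambda)=(c\alpha,\beta/c)$ over $(A,B)$.

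Define $M_0=M$ and $M_{n+1}=M_n^\ell$. By induction, every $M_n$ is nonzero and finite on both sides, and $M_{2n}$ has dimension pair $(c^n\alpha,\beta/c^n)$ over $(A,B)$. The step from $n$ to $n+1$ is Step 1 applied twice, once with each ratio $c$ and $1/c$.

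\textbf{Step 3: conclude $c=1$.} Write $c=p/q$ in lowest terms. Then $\beta q^n/p^n$ is a positive integer for every $n$, so $p^n\mid\beta$ for all $n$, which forces $p=1$. Similarly $c^n\alpha=p^n\alpha/q^n$ is an integer for every $n$, so $q^n\mid\alpha$ for all $n$, which forces $q=1$. Hence $c=1$, that is, $p_i(B)=p_i(A)$. Then $\lambda=\beta$, which gives $\dim_B({}_BM^\ell)=\dim_B(M_B)$, while $\dim_A(M^\ell_A)=\alpha=\dim_A({}_AM)$ holds by \eqref{eq:known-dual-dimensions}. So the left dual swaps the two entries of the dimension pair, as claimed.

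\textbf{Main obstacle.} The delicate point is Step 1: the multiplicativity of dimensions for tensor products over division rings must be stated correctly with cardinals. Only the finite product on the right is used to deduce finiteness of both factors on the left, which is valid because a product of cardinals is finite and nonzero only if each factor is. One also has to be careful that hypotheses on the roles of $A$ and $B$ are symmetric for the swap in Step 2. The $\FP_{i+1}$ hypotheses are assumed for both algebras, and finiteness of $M^\ell$ on both sides is exactly what Step 1 supplies. After that, the integrality argument in Step 3 is routine.
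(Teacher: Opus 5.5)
Your proposal is correct and follows the paper's proof essentially step by step. It uses the same left $B$-dimension count $p_i(B)\,\lambda=\beta\,p_i(A)$ from \eqref{eq:mixed}, the same iteration of left duals with dimension pairs $(c^n\alpha,\beta/c^n)$, and the same contradiction from integrality. The only difference is cosmetic and comes at the end: the paper observes that if $c\ne1$ one of these entries tends to $0$, which is impossible for positive integers, whereas you run an equivalent divisibility argument with $c=p/q$ in lowest terms.
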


\begin{proof}
Write $\alpha=\dim_A({}_A M)$, $\beta=\dim_B(M_B)$ and $p_A=p_i(A)$.
By \eqref{eq:known-dual-dimensions}, the left $B$-dimension of
$M^r\tensor_A T_i(A)$ is $\beta p_A$, a positive integer. If
$$
  p_B=p_i(B),\qquad s=\dim_B({}_B M^\ell),
$$
then the left $B$-dimension of the other side of \eqref{eq:mixed} is
$p_Bs$: choose a left $B$-basis of $M^\ell$ and tensor with $T_i(B)$.
Thus, as cardinals,
\begin{equation}
  p_Bs=\beta p_A.
  \label{eq:dimension-recurrence}
\end{equation}
Since $M^\ell\ne0$ and the right side is finite and nonzero, both $p_B$
and $s$ are positive integers. Therefore $M^\ell$ is finite on both
sides, and its dimension pair, now over $(B,A)$, is
$$
  \left(\beta\frac{p_A}{p_B},\,\alpha\right).
$$

Set $M^{(0)}=M$ and $M^{(m+1)}=(M^{(m)})^\ell$. Then
$$
  M^{(2m)}\in{}_A\mathrm{Mod}_B,
  \qquad M^{(2m+1)}\in{}_B\mathrm{Mod}_A
  \qquad(m\geq0).
$$
We have just shown that
the next left dual is nonzero and finite on both sides. The same argument,
with $A$ and $B$ alternating, therefore applies indefinitely. After $2m$
steps the dimension pair over $(A,B)$ is
\begin{equation}
  \left(
    \alpha\left(\frac{p_B}{p_A}\right)^m,
    \beta\left(\frac{p_A}{p_B}\right)^m
  \right).
  \label{eq:iterated-dimensions}
\end{equation}
Every entry is a positive integer for every $m\geq0$. If $p_A\ne p_B$,
one of the entries in \eqref{eq:iterated-dimensions} tends to zero as
$m\to\infty$, a contradiction. Hence $p_A=p_B$, and
\eqref{eq:dimension-recurrence} gives $s=\beta$.
\end{proof}

\begin{remark}
This argument does not identify $M$ with its twice-iterated left dual.
It uses only the finiteness of every successive dual and the dimension
recurrence. A right-sided version follows by passing to opposite algebras.
\end{remark}

\section{Descent of finiteness for resolutions}
\label{sec:FP}

The resolutions required for the smaller division algebra will be obtained
by faithfully flat descent. We include the argument over arbitrary rings.

\begin{lemma}
\label{lem:flat-FP}
Let $S\to R$ be a unital ring homomorphism such that $R$ is faithfully flat
as a right $S$-module. For a left $S$-module $N$ and $n\geq0$, $N$ is
of type $\FP_n$ if and only if $R\tensor_S N$ is of type $\FP_n$ over
$R$. The same holds for $\FP_\infty$.
\end{lemma}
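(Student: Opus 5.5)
The forward direction is the standard fact that base change preserves finite presentation data. For the converse I will use the finite-generation criteria for $\FP_n$ (Bieri--Strebel style), which detect $\FP_n$ through Tor commuting with products, together with faithful flatness.

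\emph{Forward direction.} If $P_\bullet\to N$ is a projective $S$-resolution whose terms in degrees $0,\dots,n$ are finitely generated, then $R\tensor_S P_\bullet\to R\tensor_S N$ is a projective $R$-resolution. It is exact because $R_S$ is flat, and $R\tensor_S-$ sends finitely generated projectives to finitely generated projectives. This gives $\FP_n$, and likewise $\FP_\infty$.

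\emph{Converse.} I would use the criterion: a left module $N$ over a ring $S$ is of type $\FP_n$ if and only if, for every family $(X_j)_{j\in J}$ of right $S$-modules, the natural maps
$$\operatorname{Tor}^S_m\Bigl(\prod_j X_j,N\Bigr)\to\prod_j\operatorname{Tor}^S_m(X_j,N)$$
are bijective for $m<n$ and surjective for $m=n$. Here $\operatorname{Tor}_0$ is the tensor product, so for $n=0$ this is the classical characterisation of finitely generated modules. It suffices to apply the criterion to families of the form $X_j=Y_j\tensor_S R$ for right $R$-modules $Y_j$, if I can reduce to such families. So I argue directly instead.

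\emph{Degree $0$.} Suppose $R\tensor_S N$ is finitely generated. Its generators are finite sums of elementary tensors, so they involve finitely many elements of $N$. Let $N_0\subseteq N$ be the submodule these elements generate. Then $R\tensor_S N_0\to R\tensor_S N$ is surjective, and by flatness it is injective. Hence $R\tensor_S(N/N_0)=0$, and faithful flatness gives $N=N_0$.

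\emph{Induction.} Choose a surjection $F\to N$ with $F$ finitely generated free, and let $K$ be its kernel. By flatness,
$$0\to R\tensor_S K\to R\tensor_S F\to R\tensor_S N\to0$$
is exact. If $R\tensor_S N$ is $\FP_n$ with $n\ge1$, then Schanuel's lemma shows that $R\tensor_S K$ is $\FP_{n-1}$. By induction $K$ is $\FP_{n-1}$, and then $N$ is $\FP_n$. For $\FP_\infty$, apply this for all $n$. The only subtle point is the standard fact that the kernel of a surjection from a finitely generated projective module onto an $\FP_n$ module is $\FP_{n-1}$; this follows from Schanuel's lemma applied to a good resolution. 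So the induction is clean, and the main step is the degree-$0$ descent, which uses faithful flatness exactly once.
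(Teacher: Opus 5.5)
Your proof is correct and follows the paper's argument essentially step for step. You use flat base change for the forward direction, the same degree-zero descent via $R\tensor_S(N/N_0)=0$, and the same induction on $0\to K\to S^t\to N\to0$; your Schanuel-lemma justification that the kernel of a finite free cover of an $\FP_n$ module is $\FP_{n-1}$ replaces the paper's citation of Brown, and the $\FP_\infty$ case rests on $\FP_\infty$ being equivalent to $\FP_n$ for all $n$, which you use implicitly and the paper states explicitly. The Tor--product criterion in your opening paragraph is never used and can be deleted.
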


\begin{proof}
We use \cite[VIII, Proposition~4.3(iii)]{Brown} in the following form:
if $M$ is of type $\FP_n$ and $F\to M$ is a surjection from a finitely
generated free module, then it extends to an exact sequence
$$
 F_n\longrightarrow\cdots\longrightarrow F_1\longrightarrow F
 \longrightarrow M\longrightarrow0
$$
with $F_1,\ldots,F_n$ finitely generated free. In particular, for
$n\geq1$ the kernel of $F\to M$ is of type $\FP_{n-1}$. Conversely, if
the kernel of $F\to M$ is of type $\FP_{n-1}$, then $M$ is of type
$\FP_n$, by splicing a resolution of the kernel with $F\to M$.

We now prove the equivalence in the lemma. If $N$ is of type $\FP_n$,
then so is $R\tensor_S N$, by tensoring a resolution of $N$ with the
flat module $R$. We prove the converse by induction on $n$. For $n=0$,
write finitely many generators of $R\tensor_S N$ as finite sums of pure
tensors, and let $N_0$ be generated by their second factors. Then
$R\tensor_S(N/N_0)=0$, so faithful flatness gives $N=N_0$. For $n\geq1$, $N$ is finitely generated by the case $n=0$, so there is an
exact sequence
$$
 0\longrightarrow K\longrightarrow S^t\longrightarrow N\longrightarrow0,
 \qquad t<\infty.
$$
By flatness, the sequence
$$
 0\longrightarrow R\tensor_S K\longrightarrow R^t\longrightarrow
 R\tensor_S N\longrightarrow0
$$
is exact, so $R\tensor_S K$ is of type $\FP_{n-1}$ over $R$ by the
first part of the proof. By induction, $K$ is of type $\FP_{n-1}$ over
$S$, and hence $N$ is of type $\FP_n$, again by the first part of the
proof.

Finally, a module is of type $\FP_\infty$ if and only if it is of type
$\FP_n$ for every $n$ \cite[VIII, Proposition~4.5]{Brown}, so the
statement for $\FP_\infty$ follows from the statement for $\FP_n$.
\end{proof}

\begin{lemma}
\label{lem:index-FP}
Let $E\subseteq D$ be division $k$-algebras. If at least one of
$\ellind,\rind$ is finite and $D$ is of type $\FP_n$ as a bimodule,
then $E$ is of type $\FP_n$ as a bimodule. The same holds for
$\FP_\infty$.
\end{lemma}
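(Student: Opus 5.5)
We apply \namedref{Lemma}{lem:flat-FP} to the ring homomorphism $S=E^e\to R=D^e$ induced by the inclusion. Two facts are needed. First, $D^e=D\tensor_k D^{\op}$ is faithfully flat as a right $E^e$-module. Second, $D^e\tensor_{E^e}E$ is of type $\FP_n$ over $D^e$.

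For flatness, note that $D$ is free as a right $E$-module and free as a left $E$-module, on possibly infinite bases, because $E$ is a division ring. Hence $D\tensor_k D^{\op}$ is free as a right module over $E\tensor_k E^{\op}$. Choose a basis $\{x_s\}$ of $D_E$ and a basis $\{y_t\}$ of ${}_ED$; then the elements $x_s\tensor y_t^{\op}$ form a right $E^e$-basis. A nonzero free module is faithfully flat, so the first fact holds.

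For the second fact, compute
$$
 D^e\tensor_{E^e}E\cong D\tensor_E D .
$$
This is the $D$-bimodule $D\tensor_E D$. We must show it is of type $\FP_n$ over $D^e$ whenever $D$ is, and here the finite index enters. Suppose $[D:E]_r<\infty$, so $D_E$ is finite-dimensional. I would use \namedref[(ii)]{Lemma}{lem:one-sided-FP} with $A=D$ and $B=D$, applied to $M={}_DD\tensor_E D_D$. Its right $D$-dimension equals $\dim(D_E)=[D:E]_r<\infty$. Since $D$ is of type $\FP_n$ as a bimodule, the lemma shows that $M$ is of type $\FP_n$ over $D^e$. If instead $[D:E]_\ell<\infty$, then the left $D$-dimension of $D\tensor_E D$ is $\dim({}_ED)$ and is finite, and part (i) of the same lemma applies.

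With both facts in hand, \namedref{Lemma}{lem:flat-FP} gives that $E$ is of type $\FP_n$ over $E^e$, that is, as a bimodule. The $\FP_\infty$ case follows in the same way, or by taking all $n$.

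The main point requiring care is the dimension bookkeeping for $D\tensor_E D$. One has $D\tensor_E D\cong D^{(\text{basis of }{}_ED)}$ as a right $D$-module via the second factor. Concretely, if $\{y_t\}$ is a left $E$-basis of $D$, then $D\tensor_E D=\bigoplus_t D\tensor y_t$. This decomposition is as left $D$-modules, giving left $D$-dimension $[D:E]_\ell$. Symmetrically, the right $D$-dimension is $[D:E]_r$. So each one-sided finiteness hypothesis matches exactly one part of \namedref{Lemma}{lem:one-sided-FP}, with sides assigned as stated above. The remaining verifications are the bimodule identification $E^e$-tensor and the freeness of $D^e$ over $E^e$, both of which are routine.
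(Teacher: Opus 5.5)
Your proof is correct and follows the paper's argument. Freeness of $D^e$ over $E^e$, built from a right and a left $E$-basis of $D$, gives faithful flatness. The module $D^e\otimes_{E^e}E\cong D\otimes_E D$ has left and right $D$-dimensions $[D:E]_\ell$ and $[D:E]_r$, so Lemma~\ref{lem:one-sided-FP} makes it $\FP_n$ over $D^e$, and Lemma~\ref{lem:flat-FP} descends this to $E$.

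The only slip is in your last paragraph. The decomposition $D\otimes_E D\cong D^{(\text{basis of }{}_ED)}$ is as left $D$-modules via the first factor, not as right $D$-modules via the second, which your next sentences state correctly.
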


\begin{proof}
Put $N=D\tensor_E D$, with the outer $D$-actions. Then
$D^e\tensor_{E^e}E\cong N$ as left $D^e$-modules. Since $D$ is free on
both sides over $E$, so is $D^e$ over $E^e$; in particular, $D^e$ is
faithfully flat as a right $E^e$-module. Moreover, $\dim_D({}_D N)=\ell$
and $\dim_D(N_D)=r$, so $N$ is finite-dimensional on at least one side.
Hence $N$ is of type $\FP_n$ over $D^e$ by \namedref{Lemma}{lem:one-sided-FP},
and \namedref{Lemma}{lem:flat-FP} shows that $E$ is of type $\FP_n$ over $E^e$.
The same argument applies to $\FP_\infty$.
\end{proof}

\section{Indices and Ext dimensions}
\label{sec:indices}

\begin{theorem}
\label{thm:A}
Let $E\subseteq D$ be division $k$-algebras, and let $i\geq0$.
Assume that $D$ is of type $\FP_{i+1}$ as a bimodule.
\begin{enumerate}
\item If $0<p_i(D)<\infty$ and $\rind<\infty$, then
$$
  \ellind=\rind<\infty.
$$
\item If $0<q_i(D)<\infty$ and $\ellind<\infty$, then the same conclusion holds.
\end{enumerate}
\end{theorem}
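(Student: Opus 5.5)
We prove (i) by applying \namedref{Proposition}{prop:iteration} to the bimodule $M={}_DD_E$, that is, with $A=D$ and $B=E$. Part (ii) then follows by passing to opposite algebras.

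First we record the dimension pair of $M$ over $(D,E)$. It is $(\dim_D({}_DD),\dim_E(D_E))=(1,\rind)$. So if $\rind<\infty$, then $M$ is nonzero and finite-dimensional on both sides.

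Next we check the hypotheses of \namedref{Proposition}{prop:iteration}:
\begin{enumerate}
\item $A=D$ is of type $\FP_{i+1}$ as a bimodule by assumption.
\item $B=E$ is of type $\FP_{i+1}$ as a bimodule by \namedref{Lemma}{lem:index-FP}, since $\rind<\infty$.
\item $0<p_i(A)=p_i(D)<\infty$ by assumption.
\end{enumerate}
The proposition therefore gives $p_i(E)=p_i(D)$. It also says that $M^\ell$ is finite-dimensional on both sides, with dimension pair over $(E,D)$ equal to $(\dim_E(M_E),\dim_D({}_DM))=(\rind,1)$.

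Now we identify $M^\ell=\Hom_D({}_DD,{}_DD)$. Every left $D$-linear map $D\to D$ is right multiplication by $f(1)$, so $M^\ell\cong D$ as a set. By \eqref{eq:dual-actions}, the actions are $(efd)(x)=f(xe)d$. Under $f\mapsto f(1)$ this becomes $e\cdot u\cdot d=eud$, so $M^\ell\cong{}_ED_D$ with its natural actions. Its left $E$-dimension is therefore $\ellind$. Comparing with the proposition gives $\ellind=\dim_E({}_EM^\ell)=\dim_E(M_E)=\rind<\infty$. This proves (i).

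For (ii), we pass to the opposite algebras $E^{\op}\subseteq D^{\op}$. This swaps left and right indices, so $[D^{\op}:E^{\op}]_r=\ellind<\infty$. It also gives $T_i(D^{\op})\cong T_i(D)^{\op}$, as noted in the proof of \namedref{Lemma}{lem:mixed}, so $p_i(D^{\op})=q_i(D)$. Finally, $D^{\op}$ has the same enveloping algebra up to the flip isomorphism, so it is still of type $\FP_{i+1}$ as a bimodule. Applying (i) to $E^{\op}\subseteq D^{\op}$ then gives the conclusion.

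The main point to check carefully is the identification of the left dual of ${}_DD_E$ with ${}_ED_D$, and in particular that the left $E$-action on $M^\ell$ is the natural one. The rest is bookkeeping on top of \namedref{Proposition}{prop:iteration}.
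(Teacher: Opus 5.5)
Your proposal is correct and follows the paper's proof essentially step for step. You apply \namedref{Proposition}{prop:iteration} to $M={}_DD_E$, using \namedref{Lemma}{lem:index-FP} to get $E$ of type $\FP_{i+1}$, identify $M^\ell\cong{}_ED_D$ by evaluation at $1$ to read off $\ellind=\rind$, and obtain (ii) by passing to $E^{\op}\subseteq D^{\op}$, exactly as the paper does.
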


\begin{proof}
Suppose first that $0<p_i(D)<\infty$ and $r<\infty$.
\namedref{Lemma}{lem:index-FP} shows that $E$ is of type $\FP_{i+1}$ as a
bimodule. Apply \namedref{Proposition}{prop:iteration} with
$$
 A=D,\qquad B=E,\qquad M={}_D D_E.
$$
The dimension pair of $M$ is $(1,r)$. Evaluation at $1$ identifies
$$
 M^\ell=\Hom_D({}_D D,{}_D D)\cong{}_E D_D:
 \quad f\longmapsto f(1).
$$
Indeed, $f(x)=xf(1)$, and the actions in \eqref{eq:dual-actions} become
the usual left $E$- and right $D$-actions. \namedref{Proposition}{prop:iteration}
therefore gives $\dim_E({}_E D)=r$, proving (i).

For (ii), pass to $E^{\op}\subseteq D^{\op}$. The bimodule $\FP$ condition
is preserved under reversal, $p_i(D^{\op})=q_i(D)$, and the right index
of this opposite extension is $\ellind$. Part (i) applies.
\end{proof}

The same mixed-bimodule duality also compares the Ext dimensions directly.

\Needspace{8\baselineskip}
\begin{proposition}
\label{prop:comparison}
Let $E\subseteq D$ be division $k$-algebras and let $n\geq0$.
Suppose that $D$ is of type $\FP_{n+1}$ as a bimodule.
\begin{enumerate}
\item If $\ell<\infty$, then $q_n(D)\ell=q_n(E)r$ as cardinals.
\item If $r<\infty$, then $p_n(D)r=p_n(E)\ell$ as cardinals.
\end{enumerate}
In either case, $T_n(D)=0$ if and only if $T_n(E)=0$.
\end{proposition}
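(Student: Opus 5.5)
The plan is to prove (ii) directly from \namedref{Lemma}{lem:mixed} applied to $M={}_DD_E$, then deduce (i) by passing to opposite algebras, as in the proof of \namedref{Theorem}{thm:A}.

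For (ii), suppose $r<\infty$. Then \namedref{Lemma}{lem:index-FP} shows that $E$ is of type $\FP_{n+1}$ as a bimodule. Take $A=D$, $B=E$ and $M={}_DD_E$. Then ${}_AM$ has dimension $1$ and $M_B$ has dimension $r$, both finite. So both hypotheses of \namedref{Lemma}{lem:mixed} hold for $i=n$, and \eqref{eq:mixed} gives an isomorphism in ${}_E\mathrm{Mod}_D$:
$$
 T_n(E)\tensor_E M^\ell\cong M^r\tensor_D T_n(D).
$$
As in the proof of \namedref{Theorem}{thm:A}, evaluation at $1$ identifies $M^\ell$ with ${}_ED_D$.

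Next we compute left $E$-dimensions of both sides as cardinals.

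\emph{Left side.} The space $D$ is free as a left $E$-module of rank $\ell$, possibly infinite. Hence $T_n(E)\tensor_E D\cong T_n(E)^{(\ell)}$ as left $E$-modules, and its left $E$-dimension is $p_n(E)\ell$.

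\emph{Right side.} The module ${}_DT_n(D)$ is free of rank $p_n(D)$. Hence $M^r\tensor_D T_n(D)\cong (M^r)^{(p_n(D))}$ as left $E$-modules. By \eqref{eq:known-dual-dimensions} with $\beta=r$, we have $\dim_E({}_EM^r)=r$. So the left $E$-dimension of this side is $r\,p_n(D)$.

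Comparing the two computations gives $p_n(D)r=p_n(E)\ell$.

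Part (i) follows by applying (ii) to $E^{\op}\subseteq D^{\op}$. Reversal preserves the bimodule $\FP_{n+1}$ condition. It also gives $p_n(D^{\op})=q_n(D)$ and $p_n(E^{\op})=q_n(E)$, and it swaps the roles of $\ell$ and $r$.

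For the final assertion, consider case (ii); case (i) is symmetric. Since $r$ is finite and $\ell\ge1$, the identity shows that $p_n(D)=0$ if and only if $p_n(E)=0$. A vector space vanishes exactly when its dimension is zero, so $T_n(D)=0$ if and only if $T_n(E)=0$.

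The only delicate point is bookkeeping, not a genuine obstacle. When computing dimensions of tensor products, one must use freeness on the side that is being tensored. On the right side, that means using freeness of $T_n(D)$ over $D$, not of $M^r$. One must also check that only $r<\infty$, and not $\ell<\infty$, is needed to invoke \namedref{Lemma}{lem:mixed}.
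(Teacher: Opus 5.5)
Your proposal is correct and follows essentially the same route as the paper. You apply Lemma~\ref{lem:mixed} to ${}_DD_E$ with $M^\ell\cong{}_ED_D$, count left $E$-dimensions via a left $E$-basis of $D$ on one side and a left $D$-basis of $T_n(D)$ together with $\dim_E({}_EM^r)=r$ on the other, deduce (i) from $E^{\op}\subseteq D^{\op}$, and get the vanishing equivalence from the nonvanishing of both indices.
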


\begin{proof}
We prove (ii). \namedref{Lemma}{lem:index-FP} supplies the required
$\FP_{n+1}$ condition on $E$. Apply \namedref{Lemma}{lem:mixed} to
$M={}_D D_E$, which has dimension pair $(1,r)$. Since
$M^\ell\cong{}_E D_D$, it gives an isomorphism in ${}_E\mathrm{Mod}_D$
\begin{equation}
 T_n(E)\tensor_E D
 \cong
 \Hom_{E^{\op}}(D_E,E_E)\tensor_D T_n(D).
 \label{eq:direct-comparison}
\end{equation}
A left $E$-basis of $D$ shows that the left $E$-dimension of the first
side is $p_n(E)\ell$. On the other side,
$\Hom_{E^{\op}}(D_E,E_E)$ has left $E$-dimension $r$; choosing a left
$D$-basis of $T_n(D)$ therefore gives dimension $rp_n(D)$. These
calculations hold for cardinals, and prove (ii).

Apply (ii) to $E^{\op}\subseteq D^{\op}$ to obtain (i).
Since both indices are nonzero, either identity gives the asserted
equivalence of vanishing.
\end{proof}

\begin{theorem}
\label{thm:B}
Let $D$ be a division $k$-algebra of type $\FP_\infty$ as a bimodule.
Suppose that, for some $i,j\geq0$,
$$
  0<p_i(D)<\infty,
  \qquad 0<q_j(D)<\infty.
$$
If $E\subseteq D$ is a division $k$-subalgebra with finite index on at least
one side, then $E$ is of type $\FP_\infty$ as a bimodule,
$$
  \ellind=\rind<\infty,
$$
and, for every $n\geq0$,
\begin{equation}
  p_n(E)=p_n(D),
  \qquad q_n(E)=q_n(D).
  \label{eq:all-dimensions}
\end{equation}
The equalities hold also for infinite cardinals.
\end{theorem}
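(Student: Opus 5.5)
First, since $D$ is of type $\FP_\infty$ as a bimodule and one index is finite, \namedref{Lemma}{lem:index-FP} immediately gives that $E$ is of type $\FP_\infty$ as a bimodule. The key step is then to show that \emph{both} indices are finite. Suppose $r<\infty$. Because $D$ is of type $\FP_{i+1}$ and $0<p_i(D)<\infty$, \namedref{Theorem}{thm:A}(i) gives $\ell=r<\infty$. If instead $\ell<\infty$, we use $0<q_j(D)<\infty$ and \namedref{Theorem}{thm:A}(ii), applied with $j$ in place of $i$, to get $\ell=r<\infty$. This is exactly why the hypothesis involves one finite $p$ and one finite $q$, possibly in different degrees: whichever side is finite, the matching part of Theorem~\ref{thm:A} applies.

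With $\ell=r<\infty$ established, we fix $n\geq0$ and apply \namedref{Proposition}{prop:comparison}, which needs $D$ of type $\FP_{n+1}$; this holds by the $\FP_\infty$ assumption. Part (ii) gives $p_n(D)r=p_n(E)\ell$ as cardinals, and part (i) gives $q_n(D)\ell=q_n(E)r$. Since $\ell=r$ is a positive integer, we cancel it to obtain $p_n(E)=p_n(D)$ and $q_n(E)=q_n(D)$.

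The cancellation needs a short case check on cardinals. If $p_n(D)$ is finite, then $p_n(D)r$ is finite, so $p_n(E)\ell$ is finite; hence $p_n(E)$ is finite and ordinary division applies. If $p_n(D)$ is infinite, then $p_n(D)r=p_n(D)$ because $r$ is finite and nonzero. Then $p_n(E)\ell$ is infinite, which forces $p_n(E)$ to be infinite, so $p_n(E)\ell=p_n(E)$. Thus $p_n(E)=p_n(D)$ in this case as well. The argument for $q_n$ is identical.

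The only real obstacle is obtaining two-sided finiteness of the index from one-sided finiteness. Theorem~\ref{thm:A} handles this, so the rest is bookkeeping.
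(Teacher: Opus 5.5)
Your proposal is correct and follows essentially the same argument as the paper. You use Theorem~\ref{thm:A} in degree $i$ or $j$ according to which index is finite, Lemma~\ref{lem:index-FP} for $\FP_\infty$, and Proposition~\ref{prop:comparison} in every degree. You then cancel the finite nonzero factor $\ell=r$ on cardinals, which the paper justifies the same way: multiplication by a nonzero finite cardinal is injective on finite cardinals and fixes infinite ones.
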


\begin{proof}
If $r<\infty$, apply \namedref[(i)]{Theorem}{thm:A} in degree $i$; if
$\ell<\infty$, apply \namedref[(ii)]{Theorem}{thm:A} in degree $j$.
In either case, $\ell=r<\infty$.
\namedref{Lemma}{lem:index-FP} gives type $\FP_\infty$ for $E$.
\namedref{Proposition}{prop:comparison} applies in every degree and gives
$$
 p_n(D)\ell=p_n(E)\ell,
 \qquad q_n(D)\ell=q_n(E)\ell.
$$
Multiplication by a nonzero finite cardinal is injective on cardinals:
it is injective on finite cardinals and fixes every infinite cardinal.
Consequently the common factor $\ell$ cancels, giving
\eqref{eq:all-dimensions}.
\end{proof}

\begin{theorem}
\label{thm:C}
Let $D$ be a division $k$-algebra that is twisted Calabi--Yau of dimension
$d$. Let $E\subseteq D$ be a division $k$-subalgebra with finite index on
at least one side. Then $E$ is of type $\FP_\infty$ as a bimodule,
$$
  \ellind=\rind<\infty,
$$
and there is a $k$-algebra automorphism $\tau$ of $E$ such that
\begin{equation}
  T_n(E)\cong
  \begin{cases}
    0,&n\ne d,\\
    E_\tau,&n=d,
  \end{cases}
  \label{eq:CY-Ext}
\end{equation}
in ${}_E\mathrm{Mod}_E$. If, in addition, $\pd_{E^e}E<\infty$, then $E$ is
twisted Calabi--Yau of dimension $d$. In particular, $D$ is index-symmetric.
\end{theorem}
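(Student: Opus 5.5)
The plan is to reduce everything to \namedref{Theorem}{thm:B} and then identify $T_d(E)$ through the duality \eqref{eq:direct-comparison}.

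First, the hypotheses of \namedref{Theorem}{thm:B} hold. Since $D$ is homologically smooth, the diagonal $D^e$-module is perfect, so $D$ is of type $\FP_\infty$ as a bimodule. Moreover $T_d(D)\cong D_\nu$ is one-dimensional on both sides, because $\nu$ is an automorphism. Hence $p_d(D)=q_d(D)=1$, while $p_n(D)=q_n(D)=0$ for $n\ne d$. \namedref{Theorem}{thm:B} with $i=j=d$ then gives the following:
\begin{itemize}
\item $E$ is of type $\FP_\infty$ as a bimodule;
\item $\ell=r<\infty$;
\item $p_n(E)=q_n(E)=0$ for $n\ne d$, so $T_n(E)=0$ for $n\ne d$;
\item $p_d(E)=q_d(E)=1$.
\end{itemize}
The final assertion, that $D$ is index-symmetric, follows from \namedref{Proposition}{prop:reduction}, since every division $k$-subalgebra with one finite index has equal finite indices.

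The main step is to show that an $E$-bimodule $T=T_d(E)$ with $\dim_E({}_ET)=\dim_E(T_E)=1$ is of the form $E_\tau$. I would argue as follows.
\begin{itemize}
\item Pick $0\ne u\in T$. Then $T=Eu=uE$.
\item For $b\in E$ we have $ub\in Eu$, so there is a unique $\tau(b)\in E$ with $ub=\tau(b)u$. Uniqueness holds because ${}_ET$ is free of rank one on $u$.
\item The map $\tau$ is additive, multiplicative and fixes $k$, since the bimodule is $k$-central.
\item $\tau$ is injective, because $ub=0$ forces $b=0$ ($T_E$ is free on $u$).
\item $\tau$ is surjective: given $a\in E$, the element $au$ lies in $uE$, say $au=ub$, and then $\tau(b)=a$.
\end{itemize}
So $\tau$ is a $k$-algebra automorphism. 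The map $E\to T$, $x\mapsto xu$, is then a bimodule isomorphism from the module with actions $a\cdot x\cdot b=ax\tau(b)$, because $xub=x\tau(b)u$. This is exactly $E_\tau$ with the paper's convention, and it gives \eqref{eq:CY-Ext}. This step is elementary, though one must track the paper's convention for the bimodule structure on $T_d$.

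For the last claim, assume $\pd_{E^e}E<\infty$. Since $E$ is of type $\FP_\infty$ over $E^e$, I take a resolution of $E$ by finitely generated projectives. I then use the standard fact that a module of type $\FP_\infty$ with finite projective dimension $m$ is perfect: truncate the resolution at degree $m$. The kernel at that stage is projective, because $\pd\le m$, and it is finitely generated, because the $(m-1)$-st syzygy of an $\FP_\infty$ module is $\FP_\infty$, by Brown's result quoted in \namedref{Lemma}{lem:flat-FP}. So $E$ is homologically smooth, and together with \eqref{eq:CY-Ext} it is twisted Calabi--Yau of dimension $d$. The step I expect to need the most care is this perfectness argument; it is standard, but the finite generation of the syzygy must be checked.
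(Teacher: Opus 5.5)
Your proof is correct and follows the paper's argument step for step: Theorem~\ref{thm:B} with $i=j=d$, then Proposition~\ref{prop:reduction} for index-symmetry, then the automorphism $\tau$ defined by $ub=\tau(b)u$, then truncation of a finitely generated projective resolution at degree $m=\pd_{E^e}E$. The step you flagged needs no appeal to Brown, because the truncating syzygy is the image of the finitely generated module $P_m$ and so is automatically finitely generated; the paper gets its projectivity by dimension shifting, $\Ext^1_{E^e}(K_m,-)\cong\Ext^{m+1}_{E^e}(E,-)=0$.
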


\begin{proof}
Since $D$ is homologically smooth, it is of type $\FP_\infty$; moreover
$p_d(D)=q_d(D)=1$ and all other $p_n(D),q_n(D)$ vanish.
\namedref{Theorem}{thm:B} gives equality of the indices, type $\FP_\infty$
for $E$, and the same Ext dimensions for $E$. Equality of the indices for
all such $E$ shows, by \namedref{Proposition}{prop:reduction}, that $D$ is
index-symmetric.

Since $T_d(E)$ is one-dimensional on both sides, $T_d(E)=Et=tE$ for any
$0\ne t\in T_d(E)$, and $tb=\tau(b)t$ defines a $k$-algebra automorphism
$\tau$ of $E$ with $E_\tau\cong T_d(E)$, $a\mapsto at$. This proves
\eqref{eq:CY-Ext}.

Finally, suppose $m=\pd_{E^e}E<\infty$. If $m=0$, then $E$ is
finitely generated projective over $E^e$. If $m>0$, take a resolution
$P_\bullet\to E$ by finitely generated projectives and let $K_m$ be
the image of $P_m\to P_{m-1}$. This image is finitely generated, and
dimension shifting gives
$$
 \Ext^1_{E^e}(K_m,V)\cong\Ext^{m+1}_{E^e}(E,V)=0
$$
for every left $E^e$-module $V$. A free surjection onto $K_m$ therefore
splits, so $K_m$ is projective. Truncating the resolution at $K_m$
shows that $E$ is homologically smooth. Together with
\eqref{eq:CY-Ext}, this proves that $E$ is twisted Calabi--Yau of
dimension $d$.
\end{proof}

The hypothesis $\pd_{E^e}E<\infty$ holds when $E$ is a direct summand
of $D$ in ${}_E\mathrm{Mod}_E$.
The following proposition is used in the appendix to establish the
twisted Calabi--Yau property for degree-zero division rings, and its
fixed-ring corollary gives a second argument for the Sklyanin division
rings on Artin's list (\namedref{Remark}{rem:type2-fixed-ring}).

\begin{proposition}
\label{prop:retraction}
Let $D$ be a division $k$-algebra that is twisted Calabi--Yau of
dimension $d$, and let $E\subseteq D$ be a division $k$-subalgebra with
finite index on at least one side. If $E$ is a direct summand of $D$ in
${}_E\mathrm{Mod}_E$, then $E$ is twisted Calabi--Yau of dimension $d$.
\end{proposition}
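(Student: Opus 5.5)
By \namedref{Theorem}{thm:C}, it suffices to show $\pd_{E^e}E<\infty$. That theorem already gives type $\FP_\infty$ for $E$, finite equal indices, and the twisted Calabi--Yau form \eqref{eq:CY-Ext} of the $T_n(E)$. So the whole argument reduces to producing a finite projective resolution of the diagonal $E^e$-module $E$. The direct summand hypothesis reduces this further to bounding the projective dimension of $D$ viewed as an $E$-bimodule.

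The plan is to restrict a finite resolution of $D$ along $E^e\subseteq D^e$. Since $D$ is homologically smooth, take a finite resolution $0\to P_d'\to\cdots\to P_0\to D\to0$ by finitely generated projective $D^e$-modules. Restrict every term to $E^e$ via the inclusion $E^e=E\tensor_kE^{\op}\subseteq D\tensor_kD^{\op}=D^e$. As noted in the proof of \namedref{Lemma}{lem:index-FP}, $D$ is free on both sides over $E$, so $D^e$ is free as a left (and right) $E^e$-module. Indeed, $D\tensor_kD^{\op}$ is free over $E\tensor_kE^{\op}$ on the products of a left $E$-basis of $D$ with a right $E$-basis of $D$. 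Hence each restricted $P_j$ is a projective $E^e$-module, being a direct summand of a free $D^e$-module, which is free over $E^e$. The restricted complex is still exact, so it is a finite projective $E^e$-resolution of $D$ viewed in ${}_E\mathrm{Mod}_E$. This gives $\pd_{E^e}D<\infty$, and in fact $\pd_{E^e}D\le\pd_{D^e}D$.

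Next, since $E$ is a direct summand of $D$ in ${}_E\mathrm{Mod}_E$, i.e.\ as a left $E^e$-module, we get $\pd_{E^e}E\le\pd_{E^e}D<\infty$. The projective dimension of a summand is bounded by that of the module, because $\Ext^n_{E^e}(E,-)$ is a summand of $\Ext^n_{E^e}(D,-)$. The final paragraph of \namedref{Theorem}{thm:C} then applies. It combines $\pd_{E^e}E<\infty$ with type $\FP_\infty$ to get homological smoothness, and together with \eqref{eq:CY-Ext} this shows that $E$ is twisted Calabi--Yau of dimension $d$.

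The only point needing care is the freeness of $D^e$ over $E^e$. It requires a basis of $D$ on the left over $E$ and a basis on the right, and both exist because $E$ is a division ring, so no finiteness of the index is needed for this step. The equality of dimension $d$ is automatic from \eqref{eq:CY-Ext}, since $T_d(E)\neq0$.
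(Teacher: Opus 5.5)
Your proposal is correct and follows essentially the same argument as the paper's proof. You restrict a finite projective $D^e$-resolution of $D$ along $E^e\subseteq D^e$, using that $D^e$ is free over $E^e$ on products of a left and a right $E$-basis of $D$; you pass to the bimodule summand $E$ to get $\pd_{E^e}E\le\pd_{E^e}D<\infty$; and you invoke the last assertion of Theorem~\ref{thm:C}, which is where the finite-index hypothesis enters.
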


\begin{proof}
Since $D$ is homologically smooth, it has a resolution $Q_\bullet\to D$
by projective $D^e$-modules of finite length $m$. Products of a left and a
right $E$-basis of $D$ show that $D^e$ is free as a left $E^e$-module, so
$Q_\bullet$ is also a projective $E^e$-resolution of $D$, and
$\pd_{E^e}D\le m$. As $E$ is a direct summand of $D$ in
${}_E\mathrm{Mod}_E$, we get $\pd_{E^e}E\le\pd_{E^e}D<\infty$. The last
assertion of \namedref{Theorem}{thm:C}, which uses the finite index, shows
that $E$ is twisted Calabi--Yau of dimension $d$.
\end{proof}

\begin{corollary}
\label{cor:fixed-rings}
Let $D$ be a division $k$-algebra that is twisted Calabi--Yau of
dimension $d$, and let $G$ be a finite group of $k$-algebra automorphisms
of $D$ with $|G|$ invertible in $k$. Then $D^G$ is twisted Calabi--Yau
of dimension $d$.
\end{corollary}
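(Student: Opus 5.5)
The plan is to deduce the corollary from \namedref{Proposition}{prop:retraction} with $E=D^G$. Three things must be checked: $D^G$ is a division $k$-subalgebra of $D$; it has finite index on at least one side; and it is a direct summand of $D$ in ${}_E\mathrm{Mod}_E$.

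First, $E=D^G$ is clearly a $k$-subalgebra, since $G$ acts by $k$-algebra automorphisms. It is a division ring because, for $0\ne x\in E$ and $g\in G$, we have $g(x^{-1})=g(x)^{-1}=x^{-1}$.

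Second, for the finite index we invoke Artin's lemma for division rings: if $G$ is a finite group of automorphisms of a division ring $D$, then $[D:D^G]_\ell=[D:D^G]_r\le|G|$. This follows from the standard Galois theory of division rings due to Cartan, Jacobson and Artin. To stay self-contained, I would instead prove directly that ${}_E D$ is finite-dimensional. Suppose $x_1,\dots,x_n\in D$ with $n>|G|$. Consider the system of $|G|$ right-linear equations $\sum_j g(x_j)c_j=0$, one for each $g\in G$, in the unknowns $c_j\in D$. It has a nonzero solution. Choose one with the minimal number of nonzero entries and normalize one entry to $1$. Applying $h\in G$ permutes the equations, and subtracting shows $h(c)=c$. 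So the $c_j$ lie in $E$, and the $x_j$ are right $E$-dependent. Hence $[D:E]_r\le|G|$, which is finite. Finiteness on one side is all \namedref{Proposition}{prop:retraction} needs.

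Third, the direct summand property comes from the Reynolds operator
$$\pi\colon D\to E,\qquad \pi(x)=\frac{1}{|G|}\sum_{g\in G}g(x),$$
which is defined because $|G|$ is invertible in $k$. Its image lies in $E$, it restricts to the identity on $E$, and it is $E$-bilinear: $\pi(axb)=a\pi(x)b$ for $a,b\in E$, since each $g$ fixes $a$ and $b$. Thus $D=E\oplus\ker\pi$ in ${}_E\mathrm{Mod}_E$, and \namedref{Proposition}{prop:retraction} gives that $D^G$ is twisted Calabi--Yau of dimension $d$.

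The main obstacle is the finiteness of the index. I would handle it either by citing the Galois theory of division rings or by the minimal-support argument sketched above.
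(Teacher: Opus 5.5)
Your proposal is correct and matches the paper's proof. Both arguments use the Reynolds operator to make $D^G$ a bimodule direct summand of $D$, use Artin's minimal-support argument to get $[D:D^G]_r\le|G|$, and then apply \namedref{Proposition}{prop:retraction}. Your explicit check that $D^G$ is a division $k$-subalgebra is a detail the paper leaves implicit.
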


\begin{proof}
The map $\pi(a)=|G|^{-1}\sum_{g\in G}g(a)$ is a morphism in
${}_{D^G}\mathrm{Mod}_{D^G}$ with $\pi|_{D^G}=\mathrm{id}$, so $D^G$ is a
direct summand of $D$ in ${}_{D^G}\mathrm{Mod}_{D^G}$. Artin's argument shows
$[D:D^G]_r\le|G|$: if $x_1,\ldots,x_{n}\in D$ with $n>|G|$, choose a
nonzero solution $(c_i)$ of $\sum_i g(x_i)c_i=0$ for all $g\in G$ with
the fewest nonzero entries and some $c_{i_0}=1$; applying $h\in G$ and
subtracting shows $h(c_i)=c_i$, so the $x_i$ are right linearly dependent
over $D^G$.
\namedref{Proposition}{prop:retraction} applies.
\end{proof}

\section{Applications to twisted Calabi--Yau algebras}
\label{sec:natural-examples}

We apply \namedref{Theorem}{thm:C} using the known dualities of rings of
differential operators, Weyl algebras, Ore extensions, quantum affine
spaces, and AS-regular Hopf algebras. The Hopf examples include ordinary
and quantized enveloping algebras, quantized coordinate rings, and the
group algebras considered below. We also include the Sklyanin examples;
the descent argument for their degree-zero division rings is given in
\namedref{Appendix}{sec:degree-zero-appendix}.
Throughout this section $k$ is an arbitrary field; characteristic zero is
assumed only where stated. 

\subsection{Passing to the quotient division ring}

The following is well known (cf.\ \cite[Theorem~4.7]{Baki});
we include the short argument for completeness.

\begin{lemma}
\label{lem:examples-localization}
Let $A$ be a left and right Ore $k$-domain with quotient division ring
$D=Q(A)$. If $A$ is homologically smooth, then so is $D$, and
\begin{equation}
 T_i(D)\cong D\tensor_A T_i(A)\tensor_A D
 \qquad(i\geq0)
 \label{eq:examples-localization}
\end{equation}
in ${}_D\mathrm{Mod}_D$. In particular, if $A$ is twisted Calabi--Yau of
dimension $d$, with $T_d(A)\cong A_\sigma$, then $D$ is twisted
Calabi--Yau of dimension $d$ and
$$
 T_d(D)\cong D_{\widetilde\sigma},
$$
where $\widetilde\sigma$ is the extension of $\sigma$ to $D$.
\end{lemma}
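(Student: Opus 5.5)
The approach is to base change along the localization map $A^e\to D^e$. Since $A$ is a left and right Ore domain, $D$ is flat as a left and as a right $A$-module. Hence $D^e=D\tensor_k D^{\op}$ is flat over $A^e$ on both sides, because $D\tensor_k D^{\op}\cong D\tensor_A(A\tensor_k A^{\op})\tensor_A D$. The first key step is to identify the base change of the diagonal module:
$$D^e\tensor_{A^e}A\cong D\tensor_A A\tensor_A D\cong D\tensor_A D,$$
and then to show that the multiplication map $D\tensor_A D\to D$ is an isomorphism. Surjectivity is clear. For injectivity, write a general element using common denominators. Every element of $D$ has the form $a s^{-1}$ and also $t^{-1}b$, so by the Ore condition an element of $D\tensor_A D$ can be written as $\sum_j x_j\tensor y_j$ with all $y_j\in s^{-1}A$ for a common $s$. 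Moving the $A$-parts across reduces this to $x\tensor s^{-1}$, which is zero whenever its product $xs^{-1}$ is zero. This is the standard fact that $A\to D$ is a ring epimorphism, i.e.\ $D\tensor_A D\cong D$. I expect this to be the main point requiring care, though it is routine.

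Next, take a finite resolution $P_\bullet\to A$ by finitely generated projective $A^e$-modules. By flatness, $D^e\tensor_{A^e}P_\bullet$ is a finite resolution of $D^e\tensor_{A^e}A\cong D$ by finitely generated projective $D^e$-modules, so $D$ is homologically smooth.

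For the Ext formula, I will use the following isomorphism, valid for finitely generated projective $P$:
$$\Hom_{D^e}(D^e\tensor_{A^e}P,D^e)\cong\Hom_{A^e}(P,D^e)\cong\Hom_{A^e}(P,A^e)\tensor_{A^e}D^e.$$
It is natural in $P$ and respects the right $D^e$-actions. Apply it termwise to $P_\bullet$. Since $D^e$ is flat as a left $A^e$-module, cohomology commutes with $-\tensor_{A^e}D^e$, which gives
$$T_i(D)\cong T_i(A)\tensor_{A^e}D^e.$$
Translating right $A^e$-modules into bimodules via $a\xi b=\xi(b\tensor a^{\op})$ turns $-\tensor_{A^e}D^e$ into $D\tensor_A-\tensor_A D$ with the outer actions. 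This yields \eqref{eq:examples-localization}. When checking the sides, I will note that the convention swaps factors, but the symmetric form of the result makes the bookkeeping harmless.

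Finally, suppose $T_i(A)=0$ for $i\ne d$ and $T_d(A)\cong A_\sigma$. Then $T_i(D)=0$ for $i\ne d$, and $T_d(D)\cong D\tensor_A A_\sigma\tensor_A D$. The automorphism $\sigma$ extends uniquely to $\widetilde\sigma$ on $D=Q(A)$ by the universal property of Ore localization, since $\sigma$ maps nonzero elements to nonzero elements. There is then an isomorphism
$$A_\sigma\tensor_A D\cong D_{\widetilde\sigma},\qquad u\tensor y\mapsto u\,\widetilde\sigma(y),$$
whose inverse is $y\mapsto 1\tensor\widetilde\sigma^{-1}(y)$. Combining this with $D\tensor_A D\cong D$ from the first step gives
$$D\tensor_A D_{\widetilde\sigma}\cong(D\tensor_A D)_{\widetilde\sigma}\cong D_{\widetilde\sigma}.$$
This proves that $D$ is twisted Calabi--Yau of dimension $d$ with $T_d(D)\cong D_{\widetilde\sigma}$.
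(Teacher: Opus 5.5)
Your proposal is correct and follows the paper's proof essentially step for step. The shared steps are: base change of a finite resolution along the flat map $A^e\to D^e$; the identification $D^e\otimes_{A^e}A\cong D\otimes_A D\cong D$; finite-projective evaluation plus flatness, giving $T_i(D)\cong T_i(A)\otimes_{A^e}D^e\cong D\otimes_A T_i(A)\otimes_A D$; and the explicit isomorphism $D\otimes_A A_\sigma\otimes_A D\cong D_{\widetilde\sigma}$.

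There is one small slip in your sketch of $D\otimes_A D\cong D$, a step the paper simply asserts. To move the $A$-parts across the tensor sign you need a common right denominator, $y_j\in As^{-1}$ rather than $s^{-1}A$, and the right Ore condition provides this.
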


\begin{proof}
Choose a finite resolution $P_\bullet\to A$ by finitely generated
projective $A^e$-modules. Ore localization is flat on both sides, and
$$
 D^e\tensor_{A^e}A\cong D\tensor_A D\cong D,
$$
the first isomorphism given by $(a\tensor b^{\op})\tensor x\mapsto ax\tensor b$
and the second by multiplication. Thus
$D^e\tensor_{A^e}P_\bullet\to D$ is a finite projective resolution
with finitely generated terms. Adjunction and finite projectivity give
an isomorphism of complexes of right $D^e$-modules
$$
 \Hom_{D^e}(D^e\tensor_{A^e}P_\bullet,D^e)
 \cong \Hom_{A^e}(P_\bullet,A^e)\tensor_{A^e}D^e.
$$
Since $D^e$ is flat as a left $A^e$-module, tensoring commutes with
cohomology. For a right $A^e$-module $U$, our bimodule convention gives
$$
 U\tensor_{A^e}D^e\xrightarrow{\ \sim\ }
 D\tensor_A U\tensor_A D,
 \qquad u\tensor(a\tensor b^{\op})\longmapsto b\tensor u\tensor a.
$$
This proves \eqref{eq:examples-localization}. Every automorphism of $A$
preserves its nonzero elements and hence extends to $D$. Finally,
$$
 D\tensor_A A_\sigma\tensor_A D\xrightarrow{\ \sim\ }
 D_{\widetilde\sigma},
 \qquad d_1\tensor a\tensor d_2\longmapsto
 d_1a\widetilde\sigma(d_2),
$$
which proves the last assertion.
\end{proof}

\subsection{Differential operators and Weyl skew fields}

\begin{example}
\label{ex:examples-differential}
\label{ex:examples-weyl}
Let $k$ be a field of characteristic zero and let
$X=\operatorname{Spec}R$ be a smooth affine integral $k$-variety of
dimension $n$. The ring $\mathcal D_k(R)$ of $k$-linear differential
operators on $R$ is the subalgebra of $\operatorname{End}_k(R)$
generated by multiplication operators and $k$-derivations.

This is a noetherian domain which is homologically smooth
\cite[Theorem~1]{LLM} and Calabi--Yau of dimension $2n$
\cite[Corollary~2.7]{Yekutieli}. By \namedref{Lemma}{lem:examples-localization},
its quotient division ring $Q(\mathcal D_k(R))$ is also Calabi--Yau
of dimension $2n$, so \namedref{Theorem}{thm:C} applies.

For $X=\mathbb A_k^n$, one recovers the Weyl algebra
$\mathcal D_k(k[x_1,\ldots,x_n])=A_n(k)$ and its quotient division
ring $D_n(k)$. Thus $D_n(k)$ is Calabi--Yau of dimension
$2n$.

In positive characteristic, $D_n(k)$ is finite-dimensional over its
center, so it is index-symmetric by
\namedref{Corollary}{cor:centrally-finite-indices}.
\end{example}

\subsection{Ore extensions of twisted Calabi--Yau algebras}

\begin{example}
\label{prop:function-field-Ore}
Let $A$ be a $k$-algebra, $\sigma\in\operatorname{Aut}_k(A)$ and
$\delta:A\to A$ a $k$-linear $\sigma$-derivation, i.e.
$$
 \delta(ab)=\sigma(a)\delta(b)+\delta(a)b
 \qquad\text{for } a,b\in A.
$$
The Ore extension $A[t;\sigma,\delta]$ is the $k$-algebra generated by
$A$ and $t$ subject to the relations
$$
 ta=\sigma(a)t+\delta(a)
 \qquad\text{for } a\in A.
$$ If $A$ is a noetherian domain, then
so is $A[t;\sigma,\delta]$.

If $A$ is twisted Calabi--Yau of dimension $d$, then $A[t;\sigma,\delta]$
is twisted Calabi--Yau of dimension $d+1$ by \cite[Theorem~3.3]{LWW}. If
moreover $A$ is a noetherian domain, then $Q(A[t;\sigma,\delta])$ is
twisted Calabi--Yau of dimension $d+1$ by
\namedref{Lemma}{lem:examples-localization}.

In particular, suppose $k$ has characteristic zero and let $K/k$ be a
finitely generated field extension of transcendence degree $n$. Then
$K$ is Calabi--Yau of dimension $n$. Indeed, choose a smooth affine
$k$-domain $R$ with fraction field $K$ such that
$\bigwedge_R^n\operatorname{Der}_k(R)$ is free of rank one. By
\cite[Theorems~3.9 and~3.11]{Kraehmer}, $R$ is homologically smooth,
$T_i(R)=0$ for $i\ne n$, and
$T_n(R)\cong\bigwedge_R^n\operatorname{Der}_k(R)\cong R$ in ${}_R\mathrm{Mod}_R$, so
\namedref{Lemma}{lem:examples-localization} applies. Taking $A=K$, we
conclude that $Q(K[t;\sigma,\delta])$ is twisted Calabi--Yau of
dimension $n+1$.
\end{example}

\subsection{Quantum skew fields}

\begin{example}
\label{ex:examples-quantum}
Let $s\geq1$ and let
$\mathbf q=(q_{ij})\in(k^\times)^{s\times s}$ satisfy
$q_{ii}=1$ and $q_{ij}q_{ji}=1$. Put
$$
 B_{\mathbf q}
 =k\langle X_1,\ldots,X_s\mid X_iX_j=q_{ij}X_jX_i\rangle,
 \qquad
 D_{\mathbf q}=Q(B_{\mathbf q}).
$$
The algebra $B_{\mathbf q}$ is an iterated Ore extension
$$
 B_{\mathbf q}=k[X_1][X_2;\sigma_2]\cdots[X_s;\sigma_s],
 \qquad
 \sigma_j(X_i)=q_{ji}X_i\quad(i<j).
$$
Since $k$ is Calabi--Yau of dimension $0$, applying
\namedref{Example}{prop:function-field-Ore} $s$ times shows that $B_{\mathbf q}$
is a noetherian domain which is twisted Calabi--Yau of dimension $s$
(see also \cite[Proposition~4.1]{LWW}). By
\namedref{Lemma}{lem:examples-localization}, $D_{\mathbf q}$ is twisted
Calabi--Yau of dimension $s$.
\end{example}

\Needspace{13\baselineskip}
\subsection{AS-regular Hopf algebras}
\label{sec:examples-Hopf}

The following common criterion treats enveloping algebras, quantized
enveloping algebras, quantized coordinate rings and group algebras.

\begin{proposition}
\label{prop:examples-Hopf}
Let $H$ be a noetherian AS-regular Hopf $k$-algebra of dimension $d$
which is a domain. Then $H$ and $Q(H)$ are twisted
Calabi--Yau of dimension $d$.
\end{proposition}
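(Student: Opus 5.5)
The plan is to prove the statement for $H$ by citing the known homological theory of AS-regular Hopf algebras, and then to pass to $Q(H)$ using \namedref{Lemma}{lem:examples-localization}.

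\emph{Step 1: $H$ is twisted Calabi--Yau.} By the theorem of Brown and Zhang on dualising complexes of noetherian Hopf algebras, a noetherian AS-Gorenstein Hopf algebra $H$ with AS-Gorenstein dimension $d$ has rigid dualising complex ${}^{\nu}H^{1}[d]$. Here $\nu$ is the Nakayama automorphism $\nu=S^2\circ\tau_\xi$, and $\tau_\xi$ is the winding automorphism attached to the homological integral. In the form we need, this says $\Ext^i_{H^e}(H,H^e)=0$ for $i\ne d$ and $\Ext^d_{H^e}(H,H^e)\cong H_\sigma$ for an algebra automorphism $\sigma$; this is the automorphism-twist convention, possibly after replacing $\nu$ by $\nu^{-1}$ to match our side convention.

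\emph{Step 2: $H$ is homologically smooth.} AS-regularity gives $\operatorname{gldim}H=d<\infty$. For a Hopf algebra, the diagonal bimodule satisfies $H\cong H^e\tensor_H k$, where $H$ embeds into $H^e$ via $h\mapsto\sum h_1\tensor S(h_2)^{\op}$ and $H^e$ is free, hence flat, over this copy of $H$. A finite resolution of the trivial module $k$ by finitely generated projective left $H$-modules therefore induces a finite resolution of $H$ by finitely generated projective $H^e$-modules. Such a resolution of $k$ exists because $H$ is noetherian of finite global dimension and $k$ is finitely generated. This gives smoothness. Alternatively, I would cite Brown--Zhang, or the result of Reyes--Rogalski--Zhang that noetherian AS-regular Hopf algebras are skew Calabi--Yau, which packages Steps 1 and 2 together.

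\emph{Step 3: pass to $Q(H)$.} By hypothesis $H$ is a noetherian domain, so Goldie's theorem makes it a left and right Ore domain with quotient division ring $Q(H)$. \namedref{Lemma}{lem:examples-localization} then shows that $Q(H)$ is twisted Calabi--Yau of dimension $d$, with twist $\widetilde\sigma$.

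The main obstacle is bookkeeping in Steps 1 and 2 rather than a genuinely new argument. The citation must give exactly our conventions: a left versus right twist, the identification of the rigid dualising complex with $\Ext^\bullet_{H^e}(H,H^e)$, and finite generation of the projectives for the diagonal bimodule. If a citation gives the twist on the other side, I would use that ${}_\sigma H\cong H_{\sigma^{-1}}$ as bimodules.
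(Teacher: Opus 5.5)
Your outline matches the paper's: Brown--Zhang for $H$, then \namedref{Lemma}{lem:examples-localization} for $Q(H)$. However, Step~1 skips a hypothesis that is not automatic. The Brown--Zhang results you invoke are proved for noetherian AS-Gorenstein Hopf algebras \emph{with bijective antipode}. These are the rigid dualising complex ${}^{\nu}H^{1}[d]$ and the rigid Gorenstein property \cite[Definition~4.4 and Proposition~4.5(a)]{BrownZhang}. You never verify that $S$ is bijective.

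Your own formula shows why this matters. The winding map $\tau_\xi$ is always an automorphism, with inverse $\tau_{\xi\circ S}$. So $\nu=S^2\circ\tau_\xi$ is an automorphism exactly when $S$ is bijective, while the twisted Calabi--Yau condition requires $T_d(H)\cong H_\sigma$ with $\sigma$ an automorphism. Bijectivity of the antipode is not a formal consequence of the Hopf axioms, and for noetherian Hopf algebras it is a nontrivial result. The paper obtains it for $H$ as in the statement from \cite[Corollary~0.3]{LOWY}, and only then applies \cite[Lemma~5.2(c) and Proposition~4.5(a)]{BrownZhang}. You need that citation, or another proof of bijectivity. You should also check whether your alternative reference carries the same hypothesis.

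The other steps are sound. Step~2 in fact needs no bijectivity. With $\delta(h)=\sum h_1\otimes S(h_2)^{\op}$ one has $(1\otimes y^{\op})\delta(h)=\sum h_1\otimes(S(h_2)y)^{\op}$. The map $h\otimes y\mapsto\sum h_1\otimes S(h_2)y$ is bijective, with inverse $a\otimes b\mapsto\sum a_1\otimes a_2b$. So $H^e$ is a free right $H$-module via $\delta$ on the elements $1\otimes y_i^{\op}$, for a $k$-basis $(y_i)$ of $H$. Hence $H^e\otimes_H k\cong H$, and your resolution argument is a self-contained substitute for \cite[Lemma~5.2(c)]{BrownZhang}. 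In Step~1, quoting the rigid Gorenstein property, which is stated directly in terms of $\Ext^i_{H^e}(H,H^e)$, avoids the translation from the dualising complex that you list as bookkeeping. Step~3 coincides with the paper.
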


\begin{proof}
By definition of AS-regular \cite[Definition~1.2]{BrownZhang}, $H$ is
AS-Gorenstein of finite global dimension $d$. The antipode of $H$ is
bijective by \cite[Corollary~0.3]{LOWY}. Thus $H$ is homologically
smooth by \cite[Lemma~5.2(c)]{BrownZhang}, and $H$ is rigid
Gorenstein by \cite[Proposition~4.5(a)]{BrownZhang}. The latter means
\cite[Definition~4.4]{BrownZhang} that
$$
 \Ext^i_{H^e}(H,H^e)\cong
 \begin{cases}
 0,&i\ne d,\\
 H_\nu,&i=d,
 \end{cases}
$$
for some $k$-algebra automorphism $\nu$ of $H$. Consequently, $H$ is
twisted Calabi--Yau of dimension $d$. As $H$ is a noetherian domain, it
is an Ore domain, and \namedref{Lemma}{lem:examples-localization}
applies.
\end{proof}

\begin{example}[Enveloping algebras]
\label{ex:examples-Lie}
Let $\mathfrak g$ be a finite-dimensional Lie $k$-algebra and put
$H=U(\mathfrak g)$. By the PBW theorem, $H$ is a noetherian Hopf domain,
and it is AS-regular of dimension $d=\dim_k\mathfrak g$
\cite[Proposition~6.3]{BrownZhang}. Hence
\namedref{Proposition}{prop:examples-Hopf} applies to $Q(U(\mathfrak g))$.
\end{example}

For the two quantized families below, let $q$ be an indeterminate over
$\mathbb C$ and put $k=\mathbb C(q)$. This fixes the generic parameter
setting of \cite[Propositions~6.4 and~6.5]{BrownZhang}; no assertion
about arbitrary specializations of $q$ is needed here.

\begin{example}[Quantized enveloping algebras]
\label{ex:quantized-enveloping}

Let $\mathfrak g$ be a finite-dimensional semisimple complex Lie algebra,
and let $U_q(\mathfrak g)$ be the quantized enveloping algebra over $k$
as in \cite[\S6.4]{BrownZhang}. This is a noetherian Hopf domain
\cite[Chapter~I.6]{BG} and is AS-regular of dimension $\dim_{\mathbb C}\mathfrak g$
\cite[Proposition~6.4]{BrownZhang}. Hence
\namedref{Proposition}{prop:examples-Hopf} applies to $Q(U_q(\mathfrak g))$.
\end{example}

\begin{example}[Quantized coordinate rings]
\label{ex:quantized-coordinate}
Let $G$ be a connected semisimple complex algebraic group and let
$\mathcal O_q(G)$ be its standard quantized coordinate ring over $k$,
as in \cite[\S6.5]{BrownZhang}. It is a noetherian Hopf domain
\cite[Chapter~I.8]{BG} and is AS-regular of dimension $\dim G$
\cite[Proposition~6.5]{BrownZhang}. By
\namedref{Proposition}{prop:examples-Hopf}, $Q(\mathcal O_q(G))$ is
twisted Calabi--Yau of dimension $\dim G$.
\end{example}

\begin{example}[Group algebras]
\label{ex:group-algebras}
Here $k$ may again be an arbitrary field. Let $\Gamma$ be a torsion-free polycyclic-by-finite group of Hirsch
length $h$. Then $k[\Gamma]$ is a domain by \cite[Theorem~1.4]{KLM}.
Since $\Gamma$ is torsion-free, the Hopf algebra $k[\Gamma]$ is
noetherian and AS-regular of dimension $h$ by
\cite[Theorem~6.7]{BrownZhang}. Thus
\namedref{Proposition}{prop:examples-Hopf} shows that $Q(k[\Gamma])$ is
twisted Calabi--Yau of dimension $h$.
\end{example}

\Needspace{23\baselineskip}
\subsection{Sklyanin algebras}

\begin{example}
\label{ex:examples-sklyanin}
Let $k$ be algebraically closed, let $n\in\{3,4\}$ with
$\operatorname{char}k\nmid n$, and let $S=S_n(E,\tau)$ be an elliptic
Sklyanin algebra in the sense of \cite[\S1]{SmithTate}. These are connected
graded noetherian AS-regular domains of dimension $n$
\cite[Theorem~1.1 and Corollary~1.3]{TVdB}. They have a nonzero homogeneous
central element of degree $3$ for $n=3$, and of degree $2$ for $n=4$
\cite[\S1, pp.~20--21]{SmithTate}; in particular $Z(S)\neq k$.

By \namedref{Proposition}{prop:degree-zero} in the appendix, $Q(S)$ and
the degree-zero division ring $Q_{\mathrm{gr}}(S)_0$ are twisted
Calabi--Yau, hence index-symmetric by \namedref{Theorem}{thm:C}.
\end{example}

\subsection{Index symmetry for the examples}

\begin{corollary}
\label{cor:examples-indices}
Let $D$ be one of the following division $k$-algebras:
\begin{enumerate}[label=\textup{(\arabic*)}]
\item $Q(\mathcal D_k(R))$, where $\operatorname{char}k=0$ and
$\operatorname{Spec}R$ is a smooth affine integral $k$-variety of
dimension $n$, with $d=2n$;
\item $D_n(k)$, where $\operatorname{char}k=0$, with $d=2n$;
\item $Q(K[t;\sigma,\delta])$ as in
\namedref{Example}{prop:function-field-Ore}, with
$d=\operatorname{trdeg}_k K+1$;
\item $D_{\mathbf q}$ for an $s$-variable quantum affine space, with $d=s$;
\item $Q(H)$ for a noetherian AS-regular Hopf domain $H$ of
dimension $d$. This includes $Q(U(\mathfrak g))$ for a finite-dimensional
Lie $k$-algebra $\mathfrak g$, with $d=\dim_k\mathfrak g$;
$Q(U_q(\mathfrak g))$ and $Q(\mathcal O_q(G))$ under the hypotheses of
\namedref{Examples}{ex:quantized-enveloping} and~\ref{ex:quantized-coordinate},
with $d=\dim_{\mathbb C}\mathfrak g$ and $d=\dim G$, respectively, and
$Q(k[\Gamma])$ for a torsion-free polycyclic-by-finite group $\Gamma$
of Hirsch length $h$, with $d=h$;
\item $Q(S_n(E,\tau))$ and $Q_{\mathrm{gr}}(S_n(E,\tau))_0$ under the
hypotheses of \namedref{Example}{ex:examples-sklyanin}, with $d=n$
and $d=n-1$, respectively.
\end{enumerate}
Then $D$ is index-symmetric. If $E\subseteq D$ is a division
$k$-subalgebra with one finite index, the remaining conclusions of
\namedref{Theorem}{thm:C} hold with the indicated value of $d$.
\end{corollary}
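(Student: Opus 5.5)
The plan is to reduce every case to \namedref{Theorem}{thm:C}: it suffices to know that each listed $D$ is twisted Calabi--Yau of the indicated dimension $d$ as a division $k$-algebra. Theorem~\ref{thm:C} then gives index-symmetry of $D$ directly, together with the remaining conclusions for a division $k$-subalgebra $E$ with one finite index. These are type $\FP_\infty$, equality of the indices, the shape of $T_n(E)$ in \eqref{eq:CY-Ext}, and the twisted Calabi--Yau property when $\pd_{E^e}E<\infty$. Index-symmetry of $D$ as an abstract division ring, not just over $k$, is already part of Theorem~\ref{thm:C}, via \namedref{Proposition}{prop:reduction}, since $k$ is central in each $D$.

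I will go through the list and quote the corresponding example. For (1), \namedref{Example}{ex:examples-differential} gives that $\mathcal D_k(R)$ is a noetherian domain that is homologically smooth and Calabi--Yau of dimension $2n$. A noetherian domain is Ore, so \namedref{Lemma}{lem:examples-localization} transfers this to $Q(\mathcal D_k(R))$. Item (2) is the special case $R=k[x_1,\dots,x_n]$. For (3), \namedref{Example}{prop:function-field-Ore} shows that $K$ is Calabi--Yau of dimension $\operatorname{trdeg}_kK$. Then \cite[Theorem~3.3]{LWW} makes $K[t;\sigma,\delta]$ twisted Calabi--Yau of one dimension higher, and this ring is a noetherian domain because $K$ is a field. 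Localization then applies as before. For (4), I use \namedref{Example}{ex:examples-quantum}. For (5), \namedref{Proposition}{prop:examples-Hopf} applies, with the dimension values and the domain/AS-regularity hypotheses supplied in \namedref{Examples}{ex:examples-Lie}--\ref{ex:group-algebras}.

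The main obstacle is item (6). The Sklyanin case depends on \namedref{Proposition}{prop:degree-zero} in the appendix, and the dimension bookkeeping there must be checked. For $Q(S)$ one can still localize: $S$ is a noetherian AS-regular domain of dimension $n$, and connected graded AS-regular algebras are twisted Calabi--Yau of the same dimension. This gives $d=n$. For the degree-zero division ring $Q_{\mathrm{gr}}(S)_0$ the dimension should drop by one. I would attempt this by writing $Q_{\mathrm{gr}}(S)\cong Q_{\mathrm{gr}}(S)_0[z^{\pm1};\phi]$ with $z$ a homogeneous element of degree one, so that $Q(S)$ is the quotient division ring of a skew Laurent extension of $Q_{\mathrm{gr}}(S)_0$. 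One would then descend the twisted Calabi--Yau property from dimension $n$ to $n-1$, either by a Künneth-type argument for skew Laurent extensions or by a retraction argument in the spirit of \namedref{Proposition}{prop:retraction}. I take this from the appendix, as the example does, and it yields $d=n-1$. With $d$ fixed in each case, the conclusion is just Theorem~\ref{thm:C} applied verbatim.
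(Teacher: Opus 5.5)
Your proposal is correct and follows the paper's proof. Each item is shown to be twisted Calabi--Yau of dimension $d$ by the corresponding example or proposition, with (6) resting on \namedref{Proposition}{prop:degree-zero}, and then \namedref{Theorem}{thm:C} gives all the conclusions. Your tentative skew-Laurent/K\"unneth route for $Q_{\mathrm{gr}}(S)_0$ is not what the appendix does: it instead uses a central homogeneous element $c$, \namedref{Proposition}{prop:retraction} for $D(c)$, and \namedref{Lemma}{lem:central-rational-descent}. Since you ultimately defer to the appendix, this does not affect your argument.
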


\begin{proof}
By \namedref{Example}{ex:examples-differential} for (1) and (2),
\namedref{Example}{prop:function-field-Ore} for (3),
\namedref{Example}{ex:examples-quantum} for (4),
\namedref{Proposition}{prop:examples-Hopf} for (5), and
\namedref{Example}{ex:examples-sklyanin} and
\namedref{Proposition}{prop:degree-zero} for (6), $D$ is twisted
Calabi--Yau of dimension $d$. Hence \namedref{Theorem}{thm:C} applies.
\end{proof}

\begin{corollary}
\label{cor:banff}
Let $\operatorname{char}k=0$ and let $E\subseteq D_1(k)$ be a division
$k$-subalgebra. If $E$ is noncommutative, then
$[D_1(k):E]_\ell=[D_1(k):E]_r<\infty$. If $E$ is commutative, both
indices are infinite.
\end{corollary}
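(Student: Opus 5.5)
The proof has two parts, one for each kind of subalgebra $E$.

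For noncommutative $E$, the paper states that finiteness is due to Schofield \cite[Corollary~35]{Stratiform}: a noncommutative division $k$-subalgebra of $D_1(k)$ has finite index on at least one side. Once one index is finite, \namedref{Corollary}{cor:examples-indices}(2) (equivalently \namedref{Theorem}{thm:C}, since $D_1(k)$ is Calabi--Yau of dimension $2$ by \namedref{Example}{ex:examples-weyl}) gives $[D_1(k):E]_\ell=[D_1(k):E]_r<\infty$. The only point to check is that Schofield's result applies to arbitrary division $k$-subalgebras in characteristic zero, and not only to finitely generated ones or to the case $k=\mathbb C$. I expect this citation check to be the main obstacle. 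If Schofield's statement is weaker, I would first replace $E$ by a finitely generated noncommutative division subalgebra $E_0\subseteq E$, then use the tower formula: $[D:E]_\ell\le[D:E_0]_\ell<\infty$.

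For commutative $E$, both indices must be infinite. By index-symmetry (\namedref{Corollary}{cor:examples-indices}), it suffices to show that one index cannot be finite. So suppose one index is finite. Then \namedref{Theorem}{thm:C} applies to $E$ and gives $T_n(E)=0$ for $n\ne 2$ and $T_2(E)\cong E_\tau$. On the other hand, $E$ is a field, so the bimodule structure is much more rigid. Here I would use \namedref{Proposition}{prop:comparison}, which transfers all the vanishing and nonvanishing of $T_n$ from $D$ to $E$.

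For the field $E$ I would argue as follows. For a commutative $k$-algebra of type $\FP_\infty$ as a bimodule, $T_n(E)$ is an $E$-module on which the left and right actions agree, and $E$ is of type $\FP_1$ over $E\tensor_k E$. This makes $\Omega_{E/k}$ finitely generated, so $E$ is a finitely generated field extension of $k$ (in characteristic zero this follows from the finiteness of the differentials together with the $\FP$ conditions). Then $E$ is Calabi--Yau of dimension $\operatorname{trdeg}_k E$ by \namedref{Example}{prop:function-field-Ore}, so \namedref{Theorem}{thm:C} forces $\operatorname{trdeg}_kE=2$. But $D_1(k)$ has Gelfand--Kirillov transcendence degree $2$, and a commutative subfield of transcendence degree $2$ would make $D_1(k)$ finite over a commutative subfield, hence finite over its center $k$. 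That is absurd, because $D_1(k)$ is noncommutative with center $k$.

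Alternatively, and more elementarily, I would take $x\in D_1(k)\setminus E$ and show directly that finite index over a commutative $E$ forces $D_1(k)$ to satisfy a polynomial identity: it would embed in $M_n$ over a field via the regular representation on ${}_ED$. This contradicts the known fact that $D_1(k)$ contains a free subalgebra (Makar-Limanov) in characteristic zero. I would prefer this PI/free-subalgebra route because it avoids the transcendence-degree bookkeeping.
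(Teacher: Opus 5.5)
\textbf{Noncommutative case.} Your argument is the paper's: Schofield's Corollary~35 gives finiteness, and Corollary~\ref{cor:examples-indices} gives equality. The paper applies Schofield's result directly to an arbitrary division $k$-subalgebra in characteristic zero, obtaining both indices finite. Your fallback through a finitely generated $E_0$ and the tower formula is therefore not needed, though it is harmless.

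\textbf{Commutative case: your preferred route.} The PI route is correct, but it is not the paper's argument. Both start from the same observation. If $\dim_E({}_E D_1(k))=n<\infty$, right multiplication gives an injective anti-homomorphism $D_1(k)\to\operatorname{End}_E({}_E D_1(k))\cong M_n(E)$.
\begin{itemize}
\item The paper finishes in one line. The image of $\partial x-x\partial=1$ is a commutator in $M_n(E)$, so it has trace $0$. But it is also the identity, whose trace is $n\neq 0$ because $\operatorname{char}k=0$.
\item You instead conclude that $D_1(k)$ satisfies a polynomial identity. You then invoke Makar-Limanov's free subalgebra; more cheaply, Kaplansky's theorem with $Z(D_1(k))=k$ and $\dim_k D_1(k)=\infty$ does the same job.
\end{itemize}
Your version is more conceptual. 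With Kaplansky it shows that in any division ring infinite-dimensional over its center, every commutative subfield has infinite index on both sides. The paper's trace argument is elementary and self-contained, using only the relation $[\partial,x]=1$ and characteristic zero. Neither argument needs index-symmetry: the other side is handled by left multiplication.

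\textbf{Commutative case: your first route.} You should drop it. It is redundant, because you already assume a finite index. Its last step (``finite over a commutative subfield, hence finite over its center'') is exactly the PI/Kaplansky argument, and that alone finishes the proof. Theorem~\ref{thm:C}, the $\mathrm{FP}$ descent and the transcendence-degree bookkeeping add nothing.

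It is also shaky as written.
\begin{itemize}
\item Finiteness of $\Omega_{E/k}$ does not make $E/k$ finitely generated: an algebraic closure of $k(x)$ has one-dimensional $\Omega$. You would need the $\mathrm{FP}_1$ condition directly. Finite generation of the diagonal ideal of $E\otimes_k E$ by finitely many $e_j\otimes 1-1\otimes e_j$ forces $E\otimes_{E_0}E\cong E$ for $E_0=k(e_1,\dots,e_m)$, hence $E=E_0$.
\item The passage from Gelfand--Kirillov transcendence degree $2$ to ``$D_1(k)$ finite over a commutative subfield'' is not a justified implication. It is only rescued by the hypothesis you had already assumed.
\end{itemize}
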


\begin{proof}
If $E$ is noncommutative, Schofield's finiteness result
\cite[Corollary~35]{Stratiform} gives finite left and right indices, and
they agree by \namedref{Corollary}{cor:examples-indices}. If $E$ is
commutative, both indices are infinite. Indeed, if
$\dim_E({}_E D_1(k))=n<\infty$, right multiplication gives an
anti-homomorphism $D_1(k)\to\operatorname{End}_E({}_E D_1(k))\cong M_n(E)$,
and the image of $\partial x-x\partial=1$ would be a commutator, of
trace $0$, equal to the identity matrix, of trace $n\ne0$ since
$\operatorname{char}k=0$. The right index is treated symmetrically.
\end{proof}

Taking $k=\mathbb C$ and $E\cong_{\mathbb C}D_1(\mathbb C)$ answers the
question from the 2005 Banff workshop \cite[p.~2]{Banff}.

\subsection{Division rings on Artin's list}
\label{sec:artin-list}

Throughout this subsection $k$ is algebraically closed of characteristic
zero. We use Michael Artin's list \cite{ArtinSurfaces} in the form recalled by
Bell, Ingalls and Ramkumar \cite[\S\S1 and 3]{BIR}:
\begin{enumerate}[label=\textup{(\arabic*)}]
\item division algebras finite-dimensional over their centers, which
are finitely generated fields of transcendence degree two over $k$;
\item $Q_{\mathrm{gr}}(S)_0$, the degree-zero part of the graded quotient
ring of a three-dimensional Sklyanin algebra $S$;
\item $Q(K[t;\sigma])$, where $K$ is the function field of a smooth
projective curve of genus zero or one and $\sigma$ is a $k$-automorphism;
\item $Q(K[t;\delta])$, where $K$ is the function field of a smooth
projective curve and $\delta$ is a nonzero $k$-derivation.
\end{enumerate}
In Types~(2)--(4), the division ring is required to be
infinite-dimensional over its center; centrally finite cases are counted in Type~(1).

\begin{corollary}
\label{thm:D}
Every division ring on Artin's list is index-symmetric. Those of
Types~\textup{(2)}--\textup{(4)} are twisted Calabi--Yau of dimension
two, so \namedref{Theorem}{thm:C} applies to them.
\end{corollary}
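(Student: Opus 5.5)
Type (1) is settled directly by \namedref{Corollary}{cor:centrally-finite-indices}. For Types (2)--(4) the plan is to show that each division ring is twisted Calabi--Yau of dimension two. Index-symmetry and the remaining conclusions then follow from \namedref{Theorem}{thm:C}.

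\emph{Type (2).} Let $S$ be a three-dimensional elliptic Sklyanin algebra. The hypotheses of \namedref{Example}{ex:examples-sklyanin} hold: $k$ is algebraically closed and $\operatorname{char}k=0\nmid 3$. Hence $Q_{\mathrm{gr}}(S)_0$ is twisted Calabi--Yau of dimension $n-1=2$ by \namedref{Proposition}{prop:degree-zero}, as recorded in item (6) of \namedref{Corollary}{cor:examples-indices}. One point must be checked against the convention of \cite{BIR}: the three-dimensional Sklyanin algebras there must be the elliptic ones $S_3(E,\tau)$. If the list also allows degenerate (e.g.\ finite-order $\tau$) members, those should be exactly the cases in which $Q_{\mathrm{gr}}(S)_0$ is centrally finite, and these are already counted in Type (1). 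Alternatively, \namedref{Remark}{rem:type2-fixed-ring} realizes the division ring as a fixed ring of a finite group acting on a twisted Calabi--Yau division ring. \namedref{Corollary}{cor:fixed-rings} then applies there, since $\operatorname{char}k=0$.

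\emph{Types (3) and (4).} Here $K$ is a finitely generated field extension of $k$ of transcendence degree one, and $\operatorname{char}k=0$. By \namedref{Example}{prop:function-field-Ore}, $K$ is Calabi--Yau of dimension $1$. Moreover $Q(K[t;\sigma,\delta])$ is twisted Calabi--Yau of dimension $1+1=2$. Type (3) is the case $\delta=0$ with $\sigma\in\operatorname{Aut}_k(K)$. Type (4) is the case $\sigma=\mathrm{id}$ with $\delta$ a nonzero $k$-derivation, which is an $\mathrm{id}$-derivation. No genus restriction is needed. Item (3) of \namedref{Corollary}{cor:examples-indices} covers both cases.

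With all three types shown to be twisted Calabi--Yau of dimension two, \namedref{Theorem}{thm:C} gives index-symmetry. Combined with the Type (1) case, every division ring on the list is index-symmetric. The only potential obstacle is the bookkeeping in Type (2): making sure every algebra called a three-dimensional Sklyanin algebra in \cite{BIR} either falls under \namedref{Example}{ex:examples-sklyanin} or yields a centrally finite division ring.
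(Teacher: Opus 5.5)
Your proposal is correct and follows the paper's proof: Type~(1) is handled by \namedref{Corollary}{cor:centrally-finite-indices}, Types~(3) and~(4) by \namedref{Example}{prop:function-field-Ore}, and Type~(2) by \namedref{Example}{ex:examples-sklyanin} together with \namedref{Proposition}{prop:degree-zero}, after which \namedref{Theorem}{thm:C} gives index-symmetry. On your bookkeeping point, the paper simply assumes the three-dimensional Sklyanin algebras on the list are the elliptic ones $S_3(E,\tau)$ of \namedref{Example}{ex:examples-sklyanin}, and your fixed-ring alternative is exactly its \namedref{Remark}{rem:type2-fixed-ring}.
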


\begin{proof}
Let $D$ be on Artin's list. For Type~(1), $D$ is index-symmetric by
\namedref{Corollary}{cor:centrally-finite-indices}. For Types~(3) and~(4), $D$ is twisted Calabi--Yau of dimension two by
\namedref{Example}{prop:function-field-Ore}. For Type~(2),
$D=Q_{\mathrm{gr}}(S)_0$ for a three-dimensional Sklyanin algebra $S$, so
$D$ is twisted Calabi--Yau of dimension two by
\namedref{Example}{ex:examples-sklyanin} and
\namedref{Proposition}{prop:degree-zero}.
\end{proof}

\begin{remark}
\label{rem:type2-fixed-ring}
For Type~(2) there is also an argument via fixed rings. By the
description in \cite[\S3]{BIR}, attributed there, through Artin, to
Van den Bergh, $D\cong F^\iota$, where $F=Q(k(C)[t;\sigma])$ for an
elliptic curve $C$ and an infinite-order translation $\sigma$, and
$\iota$ is the involution induced by negation on $C$ and
$t\mapsto t^{-1}$. Since $F$ is of Type~(3),
\namedref{Corollary}{cor:fixed-rings} with $G=\{1,\iota\}$ shows again that
$D$ is twisted Calabi--Yau of dimension two.
\end{remark}

\begin{remark}
\label{rem:BIR-degree}
Both indices of a $k$-embedding between members of Artin's list are finite
\cite[\S3]{BIR}, so they are equal by \namedref{Corollary}{thm:D}. For
example, let $X$ and $Y$ be smooth irreducible affine curves over $k$
with coordinate rings $R_X$ and $R_Y$, and let $D(X)=Q(\mathcal D_k(R_X))$
and $D(Y)=Q(\mathcal D_k(R_Y))$; these are of Type~(4). Bell, Ingalls and
Ramkumar \cite[\S3]{BIR} define the degree of a $k$-embedding
$\varphi\colon D(X)\hookrightarrow D(Y)$ as the minimum of its left and
right indices. Since $D(Y)$ is index-symmetric by
\namedref{Corollary}{thm:D} and the indices of $\varphi$ are finite, the two
indices agree, so the minimum is unnecessary.
\end{remark}

\appendix
\section{Degree-zero division rings}
\label{sec:degree-zero-appendix}

This appendix supplies the descent argument used in
\namedref{Example}{ex:examples-sklyanin}. All tensor products are over
$k$ unless another base is indicated.

\begin{lemma}
\label{lem:central-rational-descent}
Let $D$ be a division $k$-algebra and let $t$ be a central indeterminate.
If $C=D(t)$ is twisted Calabi--Yau of dimension $d$, then $d\geq1$ and
$D$ is twisted Calabi--Yau of dimension $d-1$.
\end{lemma}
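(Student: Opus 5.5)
We relate the bimodule homology of $C=D(t)$ to that of $D$ through the intermediate algebra $D[t]=D\tensor_k k[t]$. The plan has three parts: pass from $D(t)$ down to $D[t]$, compute $T_n(D[t])$ in terms of $T_n(D)$ by a Künneth argument, and then read off the twisted Calabi--Yau data for $D$.

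First, $C$ is the central localization $D[t]_S$ at $S=k[t]\setminus\{0\}$, so $C^e=(D[t]^e)_{S\tensor S}$ is a localization of $D[t]^e=D^e[t,t']$. Second, $D[t]\cong D\tensor_k k[t]$, and so $D[t]^e\cong D^e\tensor_k k[t]^e$. If $P_\bullet\to D$ is an $D^e$-resolution and $K_\bullet\colon 0\to k[t,t']\xrightarrow{t-t'}k[t,t']\to 0$ is the Koszul resolution of $k[t]$, then $P_\bullet\tensor_k K_\bullet$ resolves $D[t]$. Localizing, we get
$$C\cong C^e\tensor_{D^e\tensor k[t]^e}(D\tensor k[t]),$$
since $k(t)\tensor_{k[t]}k[t]\tensor_{k[t]}k(t)\cong k(t)$.

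Hence, with $R=k(t)^e=k(t)\tensor_k k(t)$ (a commutative ring, flat over $k[t]^e$), $C$ is the diagonal bimodule $D\tensor_k k(t)$ over $D^e\tensor_k R$. The module $k(t)$ over $R$ has the resolution $0\to R\xrightarrow{t\tensor1-1\tensor t}R\to k(t)\to0$, where $t\tensor 1-1\tensor t$ is a nonzerodivisor because $R$ is a domain, being a localization of $k[t,t']$. So $P_\bullet\tensor K'_\bullet$, with $K'_\bullet$ this length-one complex, resolves $C$ over $C^e=D^e\tensor R$. The Künneth formula (everything is over the field $k$, and $K'$ consists of finite free modules) then gives
$$T_n(C)\cong T_{n-1}(D)\tensor_k \Ext^1_R(k(t),R)\cong T_{n-1}(D)\tensor_k k(t)$$
as $C$-bimodules, with $\Ext^0_R(k(t),R)=0$ because the differential is injective. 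Concretely, $\Hom_{C^e}(P_\bullet\tensor K',C^e)\cong\Hom_{D^e}(P_\bullet,D^e)\tensor_k\Hom_R(K',R)$ when $P_\bullet$ has finitely generated terms. This is where we need $D$ to be of type $\FP_\infty$ as a bimodule.

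The key obstacle is establishing finiteness for $D$ before using smoothness of $C$. To handle it, observe that $C^e$ is free, hence faithfully flat, as a right $D^e$-module: $C^e=D^e\tensor_k R$ and $R$ is a nonzero free $k$-module. Moreover, $C^e\tensor_{D^e}D\cong D\tensor_k R$, and $C=D\tensor k(t)$ is the quotient of $D\tensor_k R$ by $(t\tensor1-1\tensor t)$. We therefore get a short exact sequence $0\to D\tensor R\to D\tensor R\to C\to0$ of $C^e$-modules. A mapping cone / $\FP$ two-out-of-three argument (Brown VIII.4) shows that $C$ of type $\FP_\infty$ forces $D\tensor R$ to be $\FP_\infty$, though this needs care. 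I would instead argue the other way: the multiplication map $D\tensor R\to D\tensor R$ by a central element of $C^e$ lying in its Jacobson-type radical is not available, so I would use a graded/Nakayama substitute, or else consider the surjection $C^e\tensor_{C^e\text{-}}\dots$. Concretely, the first thing I would try is to apply Lemma~\ref{lem:flat-FP} to the flat map $D^e\to C^e$. For this we must show that $D\tensor R$ is $\FP_\infty$ over $C^e$. I expect this to follow by restricting along $R\to k(t)$, or equivalently from perfectness of $C$ together with the Koszul description of $C$ as the cone of $D\tensor R\xrightarrow{t\tensor 1-1\tensor t} D\tensor R$.

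Once $D$ is $\FP_\infty$, perfectness follows by the dimension-shifting argument of Theorem~\ref{thm:C}, since $\Ext^{m}_{D^e}(D,-)$ vanishes for large $m$ by the Künneth computation with arbitrary coefficients. The formula $T_n(C)\cong T_{n-1}(D)\tensor k(t)$ then gives $T_{n-1}(D)=0$ for $n\ne d$, by faithful flatness over $k$. For $n=d$ it gives $d\geq1$, because $T_{-1}(D)=0$ while $T_d(C)\ne0$. Finally, $T_{d-1}(D)\tensor k(t)\cong C_\sigma$, so $T_{d-1}(D)$ has dimension one on both sides; counting dimensions over $D$, as in Theorem~\ref{thm:B}, gives $p_{d-1}(D)=q_{d-1}(D)=1$. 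As in the proof of Theorem~\ref{thm:C}, this yields $T_{d-1}(D)\cong D_\tau$.
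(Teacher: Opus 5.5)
Your overall architecture (Künneth for $D[t]$, then localization, then faithfully flat descent of finiteness) is the right one, but there are two genuine gaps. The first is in your opening step. $C=D(t)$ is the quotient division ring of $D[t]$, not the central localization $D[t]_S=D\tensor_k k(t)$ with $S=k[t]\setminus\{0\}$. These differ as soon as $D$ contains an element $s$ transcendental over $k$, which is the case of interest (e.g.\ in the Sklyanin application, where $k$ is algebraically closed and $D\neq k$). Indeed, suppose $(t-s)f=g$ with $f=\sum_i f_it^i\in D[t]$ and $0\neq g\in k[t]$. Multiplying the coefficient identities $f_{j-1}-sf_j=g_j$ on the left by $s^j$ and summing gives $g(s)=0$. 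So $t-s$ is not invertible in $D\tensor_k k(t)$.

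Hence $C^e$ is not $D^e\tensor_k R$, $C$ is not $D\tensor_k k(t)$, and your formula $T_n(C)\cong T_{n-1}(D)\tensor_k k(t)$ is false. What localization actually gives is $T_{i+1}(C)\cong C\tensor_{D[t]}(T_i(D)\tensor_k k[t])\tensor_{D[t]}C\cong\Lambda\tensor_{D^e}T_i(D)$, with $\Lambda=C\tensor_{k(t)}C^{\op}$. Reading off the vanishing for $i\neq d-1$ and the one-dimensionality of $T_{d-1}(D)$ therefore requires faithful flatness of $\Lambda$ over $D^e$ and an argument beyond counting $D$-bases. The paper gets both from the map $\Lambda\tensor_{D^e}M\to M((t))$, $\lambda\tensor m\mapsto\lambda m$, which comes from $C\subseteq D((t))$.

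Second, the step you identify as the key obstacle is left open, and descending along $D^e\to C^e$ cannot work as set up. The module $C^e\tensor_{D^e}D\cong C\tensor_D C$ recovers $C$ only as its quotient by the central non-unit $h=t\tensor1-1\tensor t^{\op}$. As you observe yourself, no Nakayama argument lifts finiteness from that quotient. The missing idea is to descend along $D^e\to\Lambda=C^e/(h)$ instead. For a $D^e[t]$-module $N$ one has $\Lambda\tensor_{D^e[t]}N\cong C\tensor_{D[t]}N\tensor_{D[t]}C$. So $\Lambda$ is flat over $D^e$ by Ore localization, and $\Lambda\tensor_{D^e}D\cong C$ on the nose. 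Both $\Lambda$ (via the two-term resolution by $h$) and $C$ are $\FP_\infty$ over $C^e$, so successive syzygies show $C$ is $\FP_\infty$ over $\Lambda$. Lemma~\ref{lem:flat-FP} then yields $\FP_\infty$ for $D$.

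Finite projective dimension of $D$ over $D^e$ also needs a real argument rather than an appeal to Künneth with arbitrary coefficients. The paper restricts a finite projective $C^e$-resolution of $C$ to $D^e$, over which $C^e$ is free. It then uses the $D$-bimodule retraction $C\subseteq D((t))\to D$ onto the constant coefficient, which makes $D$ a direct summand of $C$.
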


\begin{proof}
Put $\Lambda=C\tensor_{k(t)}C^\op$ and
$h=t\tensor1-1\tensor t^\op\in C^e$. The element $h$ is central and
regular: $C^e$ is free over $k(t)\tensor_k k(t)$, a localization of
$k[x,y]$ in which $h=x-y$. Moreover, $\Lambda=C^e/(h)$.

We first show that $\Lambda$ is faithfully flat as a right $D^e$-module.
For a left $D^e[t]$-module $N$, viewed as a $D[t]$-bimodule on which
the two actions of $t$ agree, there is an isomorphism
$$
 \Lambda\tensor_{D^e[t]}N
 \cong C\tensor_{D[t]}N\tensor_{D[t]}C.
$$
Ore localization is flat on both sides, and $D^e[t]$ is free over
$D^e$, so $\Lambda$ is right $D^e$-flat. For any $D$-bimodule $M$,
the inclusion $C\subseteq D((t))$ makes $M((t))$ a $\Lambda$-module.
The map
$$
 \Lambda\tensor_{D^e}M\longrightarrow M((t)),\qquad
 \lambda\tensor m\longmapsto\lambda m
$$
sends $1\tensor m$ to the constant series $m$, proving faithfulness.
The preceding tensor isomorphism also gives
$\Lambda\tensor_{D^e}D\cong C$.

We next descend homological smoothness. Both $\Lambda$ and $C$ are
of type $\FP_\infty$ over $C^e$: for $\Lambda$ use the two-term
resolution given by multiplication by $h$. Successively taking finite
free $\Lambda$-modules mapping onto $C$ and its syzygies shows that
$C$ is of type $\FP_\infty$ over $\Lambda$. Indeed, at each step the
kernel remains of type $\FP_\infty$ over $C^e$
\cite[VIII, Proposition~4.3]{Brown}, so is finitely generated over
$\Lambda$. Faithfully flat descent
(\namedref{Lemma}{lem:flat-FP}) now gives type $\FP_\infty$ for $D$
over $D^e$. Also, $C^e$ is free as a left $D^e$-module, and the
constant-coefficient map $C\subseteq D((t))\to D$ is a $D$-bimodule
retraction. Restricting a finite projective $C^e$-resolution of $C$
therefore gives $\pd_{D^e}D<\infty$. Truncating a resolution by
finitely generated projectives shows that $D$ is homologically smooth.

Finally, write $U_i=T_i(D)$. Tensoring a finite projective bimodule
resolution of $D$ with the standard two-term resolution of $k[t]$ gives
$$
 T_0(D[t])=0,\qquad
 T_{i+1}(D[t])\cong U_i\tensor_k k[t]\quad(i\geq0).
$$
By \namedref{Lemma}{lem:examples-localization} and the tensor
isomorphism above,
$$
 T_0(C)=0,\qquad
 T_{i+1}(C)\cong\Lambda\tensor_{D^e}U_i.
$$
Thus $d\geq1$, and faithful flatness gives $U_i=0$ for $i\neq d-1$
and $U_{d-1}\neq0$. Let $0\neq u\in U_{d-1}$; then $1\tensor u\neq0$ in
$T_d(C)$, as its image in $U_{d-1}((t))$ is $u$. Since $T_d(C)$ is
one-dimensional over $C$ on the left, every $u'\in U_{d-1}$ satisfies
$1\tensor u'=c(1\tensor u)$ for some $c\in C\subseteq D((t))$. Mapping to
$U_{d-1}((t))$ gives $u'=cu$, and $cu$ lies in $U_{d-1}$ only if $c\in D$. Hence
$U_{d-1}=Du$, likewise $U_{d-1}=uD$, and $U_{d-1}\cong D_\nu$ for an
automorphism $\nu$, as in the proof of \namedref{Theorem}{thm:C}.
\end{proof}

\begin{proposition}
\label{prop:degree-zero}
Let $A$ be a connected graded noetherian AS-regular $k$-domain of
dimension $d$ with $Z(A)\neq k$. Then $Q(A)$ and
$Q_{\mathrm{gr}}(A)_0$ are twisted Calabi--Yau of dimensions $d$ and
$d-1$, respectively.
\end{proposition}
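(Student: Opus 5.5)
The plan has two parts. First, show that $A$ is twisted Calabi--Yau, so that \namedref{Lemma}{lem:examples-localization} gives the claim for $Q(A)$. Second, identify the graded quotient ring in the form $Q_{\mathrm{gr}}(A)\cong Q_{\mathrm{gr}}(A)_0[z,z^{-1};\phi]$, pass to its full quotient division ring, and recognize the result as a rational function field $D(t)$ with $t$ central over $D=Q_{\mathrm{gr}}(A)_0$. Then \namedref{Lemma}{lem:central-rational-descent} finishes.

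\textbf{Step 1: $A$ is twisted Calabi--Yau.} A connected graded noetherian AS-regular algebra of dimension $d$ is homologically smooth, with $T_i(A)=0$ for $i\ne d$ and $T_d(A)\cong A_\mu$ (up to a grading shift) for the Nakayama automorphism $\mu$. This is the standard rigid-Gorenstein property of AS-regular algebras (Yekutieli, Van den Bergh, or Reyes--Rogalski--Zhang), and I would cite it. Since $A$ is a noetherian domain it is Ore, so \namedref{Lemma}{lem:examples-localization} shows that $Q(A)$ is twisted Calabi--Yau of dimension $d$.

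\textbf{Step 2: structure of $Q_{\mathrm{gr}}(A)$.} Invert the nonzero homogeneous elements to get the graded division ring $Q_{\mathrm{gr}}(A)$. Its degree-zero part $D=Q_{\mathrm{gr}}(A)_0$ is a division ring. Let $m$ be the gcd of the degrees of the nonzero homogeneous components. The key fact is that $Z(A)\ne k$: this supplies a homogeneous central element $z$ of positive degree $e$. If $A_1\neq0$, pick $0\ne x\in A_1$ and put $t=z\,x^{-e}\in D$. In general, use a homogeneous unit $u$ of degree $e$ built from elements of the appropriate degrees. Then $Q_{\mathrm{gr}}(A)=D[u^{\pm1};\phi]$, where $\phi$ is conjugation by $u$; here one works with the Veronese in degrees divisible by $m$. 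Taking $u$ of degree $m$, we get
$$Q(A)=Q\bigl(D[u;\phi]\bigr).$$
The central element $z$ has the form $z=c\,u^{e/m}$ with $c\in D$.

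\textbf{Step 3: passing to a central variable.} The main obstacle is that the variable $u$ is twisted, not central, while \namedref{Lemma}{lem:central-rational-descent} requires $D(t)$ with $t$ central. Instead of $u$, use the central element $z$ itself, which is transcendental over $D$. I would then compare $Q(A)$ with the subring generated by $D$ and $z$. First, $D[z]\cong D[t]$ with $t$ central, so $D(t)\subseteq Q(A)$. Second, $Q(A)$ has finite index $e/m$ over $D(t)$ on both sides, with basis $1,u,\dots,u^{e/m-1}$. So $D(t)$ is a division subalgebra of finite index in the twisted Calabi--Yau division algebra $Q(A)$.

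By \namedref{Proposition}{prop:retraction}, it suffices to exhibit $D(t)$ as a $D(t)$-bimodule direct summand of $Q(A)$. The grading by degree modulo $e$ provides this: projection onto the component of degree $\equiv0\pmod e$ is a bimodule retraction, since $D(t)$ sits in degrees divisible by $e$. This extends to $Q(A)$ because $Q(A)=\bigoplus_{j<e/m}D(t)u^j$. Hence $D(t)$ is twisted Calabi--Yau of dimension $d$.

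Finally, \namedref{Lemma}{lem:central-rational-descent} gives that $D=Q_{\mathrm{gr}}(A)_0$ is twisted Calabi--Yau of dimension $d-1$. The step I expect to need the most care is verifying the decomposition $Q(A)=\bigoplus_{j<e/m}D(t)u^j$, that is, that localizing at the Ore set of $D[t]$ already produces a division ring of that form.
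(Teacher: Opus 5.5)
Your proposal is correct and follows the paper's proof essentially step for step. First, $A$ is twisted Calabi--Yau by the graded rigid-Gorenstein property, with the shift disappearing in ungraded Ext, and then $Q(A)$ is twisted Calabi--Yau by \namedref{Lemma}{lem:examples-localization}; next, $Q(A)=\bigoplus_{j<e/m}D(z)u^j$, projection onto the $u^0$-coefficient is a $D(z)$-bimodule retraction, \namedref{Proposition}{prop:retraction} makes $D(z)$ twisted Calabi--Yau, and \namedref{Lemma}{lem:central-rational-descent} descends to $D$. The decomposition you flag is verified in the paper exactly as you would expect: $u^{e/m}\in D^\times z$ and $u$ normalizes $D(z)$, so the sum is a subring containing $Q_{\mathrm{gr}}(A)$; directness follows by clearing denominators in $D[z]$ and comparing degrees modulo $e$; and a domain finite-dimensional over $D(z)$ is a division ring, hence equal to $Q(A)$.
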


\begin{proof}
By \cite[Propositions~4.4(c) and~4.5]{YZ}, $A$ has a finite resolution by
finitely generated graded projective $A^e$-modules, and in the graded sense
$T_i(A)=0$ for $i\neq d$, while $T_d(A)$ is a shift of a twist of $A$ by an
automorphism. Since the ungraded $\Hom$ from a finitely generated graded
module is the sum of its homogeneous components, the same holds without the
grading, with the shift removed.
Thus $A$, and by \namedref{Lemma}{lem:examples-localization} also $L=Q(A)$, is
twisted Calabi--Yau of dimension $d$.

Put $R=Q_{\mathrm{gr}}(A)$ and $D=R_0$. Choose a nonzero homogeneous
central element $c\in A$ of positive degree; such an element exists
by taking a positive-degree homogeneous component of an element of
$Z(A)\setminus k$. The support of the graded division ring $R$ is
$e\mathbb Z$ for some $e>0$. Choose $0\neq u\in R_e$, so
$R=D[u,u^{-1};\sigma]$, and write $\deg c=me$ with $m>0$.
Distinct powers of $c$ have distinct degrees, so $c$ is a central
indeterminate over $D$. Put $K=D(c)$.

Since $u^mc^{-1}\in D^\times$ and $u$ normalizes $K$, the sum
$\bigoplus_{j=0}^{m-1}Ku^j=\bigoplus_{j=0}^{m-1}u^jK$ is a
subring of $L$ containing $R$. Directness follows by clearing
denominators in $D[c,c^{-1}]$ and comparing degrees modulo $me$.
It is a domain finite-dimensional on both sides over $K$, hence a
division ring, and therefore equals $L$. Projection onto the
coefficient of $u^0$ is a $K$-bimodule retraction $L\to K$.
By \namedref{Proposition}{prop:retraction}, $K=D(c)$ is twisted
Calabi--Yau of dimension $d$. Now
\namedref{Lemma}{lem:central-rational-descent} gives the conclusion
for $D$.
\end{proof}

\section*{Declaration of AI use}

The central idea of this paper arose from an unexpected suggestion
by GPT-6 Astra~\cite{GPT6Astra} just after its release. 
We asked the model to investigate the problem in
\cite[Remark~4.4(i)]{LT} in the context of positive characteristic methods.
This problem is a special case of the question about the first Weyl
skew field recorded in \cite{Banff}.

Alongside an approach in positive characteristic, the model suggested
studying the problem through Calabi--Yau algebras and proposed a
preliminary less general form of the bimodule duality now stated in
\namedref{Lemma}{lem:mixed}. This change of viewpoint led to the
twisted Calabi--Yau framework and the results in this paper.

We also used GPT-6 Astra, Fable 5.1, and Opus 5.5 to develop and test
generalizations, draft proofs and text, check computations, and search
the literature. We take full responsibility for the results, proofs,
and references in the final manuscript.


\begin{thebibliography}{99}

\bibitem{ArtinGeometric}
E. Artin,
\emph{Geometric Algebra},
Interscience Tracts in Pure and Applied Mathematics, vol.~3,
Interscience Publishers, New York--London, 1957.

\bibitem{ArtinSurfaces}
M. Artin,
\emph{Some problems on three-dimensional graded domains},
in \emph{Representation Theory and Algebraic Geometry}
(Waltham, MA, 1995), London Math. Soc. Lecture Note Ser.
\textbf{238}, Cambridge University Press, Cambridge, 1997, pp.~1--19.
\href{https://doi.org/10.1017/CBO9780511525995.002}{doi:10.1017/CBO9780511525995.002}.

\bibitem{Baki}
Y.~S. Baki,
\emph{Group-graded twisted Calabi--Yau algebras},
J. Pure Appl. Algebra \textbf{229} (2025), no.~1, article 107849.
\href{https://doi.org/10.1016/j.jpaa.2024.107849}{doi:10.1016/j.jpaa.2024.107849}.
\href{https://arxiv.org/abs/2405.09025v2}{arXiv:2405.09025v2}.

\bibitem{Banff}
M. Artin, C. Ingalls, Z. Reichstein, L. Small, and J.~J. Zhang,
\emph{Interactions between noncommutative algebra and algebraic geometry},
report of the Banff International Research Station workshop 05w5035,
September 10--15, 2005, especially p.~2.
\url{https://www.birs.ca/workshops/2005/05w5035/report05w5035.pdf}.

\bibitem{BIR}
J.~P. Bell, C. Ingalls, and R. Ramkumar,
\emph{Embeddings of quotient division algebras of rings of differential
operators},
Israel J. Math. \textbf{219} (2017), no.~1, 411--430.
\href{https://doi.org/10.1007/s11856-017-1485-z}{doi:10.1007/s11856-017-1485-z}.
\href{https://arxiv.org/abs/1411.3574}{arXiv:1411.3574}.

\bibitem{Brown}
K.~S. Brown,
\emph{Cohomology of Groups},
Graduate Texts in Mathematics \textbf{87}, Springer, New York, 1982.
\href{https://doi.org/10.1007/978-1-4684-9327-6}{doi:10.1007/978-1-4684-9327-6}.

\bibitem{BG}
K. A. Brown and K. R. Goodearl,
\emph{Lectures on Algebraic Quantum Groups},
Advanced Courses in Mathematics CRM Barcelona, Birkh\"auser, Basel, 2002.
\href{https://doi.org/10.1007/978-3-0348-8205-7}{doi:10.1007/978-3-0348-8205-7}.

\bibitem{BrownZhang}
K. A. Brown and J. J. Zhang,
\emph{Dualising complexes and twisted Hochschild (co)homology for
noetherian Hopf algebras},
J. Algebra \textbf{320} (2008), no.~5, 1814--1850.
\href{https://doi.org/10.1016/j.jalgebra.2007.03.050}{doi:10.1016/j.jalgebra.2007.03.050}.
\href{https://arxiv.org/abs/math/0603732}{arXiv:math/0603732}.

\bibitem{Cohn}
P.~M. Cohn,
\emph{Quadratic extensions of skew fields},
Proc. London Math. Soc. (3) \textbf{11} (1961), 531--556.
\href{https://doi.org/10.1112/plms/s3-11.1.531}{doi:10.1112/plms/s3-11.1.531}.

\bibitem{Kraehmer}
U. Kr\"ahmer,
\emph{Poincar\'e duality in Hochschild (co)homology},
in: \emph{New Techniques in Hopf Algebras and Graded Ring Theory} (Brussels, 2006),
S. Caenepeel and F. Van Oystaeyen (eds.),
K. Vlaam. Acad. Belgie Wet. Kunsten, Brussels, 2007, pp.~117--125.

\bibitem{KLM}
P. H. Kropholler, P. A. Linnell and J. A. Moody,
\emph{Applications of a new $K$-theoretic theorem to soluble group rings},
Proc. Amer. Math. Soc. \textbf{104} (1988), no.~3, 675--684.
\href{https://doi.org/10.1090/S0002-9939-1988-0964842-0}{doi:10.1090/S0002-9939-1988-0964842-0}.

\bibitem{LLM}
T. Lambre and P. Le Meur,
\emph{Duality for differential operators of Lie--Rinehart algebras},
Pacific J. Math. \textbf{297} (2018), no.~2, 405--454.
\href{https://doi.org/10.2140/pjm.2018.297.405}{doi:10.2140/pjm.2018.297.405}.
\href{https://arxiv.org/abs/1709.03973}{arXiv:1709.03973}.

\bibitem{LT}
N. Lauritzen and J.~F. Thomsen,
\emph{Two properties of endomorphisms of Weyl algebras},
J. Algebra \textbf{479} (2017), 137--158.
\href{https://doi.org/10.1016/j.jalgebra.2016.12.036}{doi:10.1016/j.jalgebra.2016.12.036}.
\href{https://arxiv.org/abs/1508.02499}{arXiv:1508.02499}.

\bibitem{LWW}
L.-Y. Liu, S.-Q. Wang, and Q.-S. Wu,
\emph{Twisted Calabi--Yau property of Ore extensions},
J. Noncommut. Geom. \textbf{8} (2014), no.~2, 587--609.
\href{https://doi.org/10.4171/JNCG/165}{doi:10.4171/JNCG/165}.
\href{https://arxiv.org/abs/1205.0893}{arXiv:1205.0893}.

\bibitem{LOWY}
J. L\"u, S.-Q. Oh, X. Wang, and X. Yu,
\emph{A note on the bijectivity of the antipode of a Hopf algebra and its
applications},
Proc. Amer. Math. Soc. \textbf{146} (2018), no.~11, 4619--4631.
\href{https://doi.org/10.1090/proc/14140}{doi:10.1090/proc/14140}.
\href{https://arxiv.org/abs/1704.05731v2}{arXiv:1704.05731v2}.

\bibitem{GPT6Astra}
OpenAI,
\emph{GPT-6 Astra},
large language model, 2026.
\url{https://developers.openai.com/api/docs/models/gpt-6-astra}.
Accessed 4 September 2026.

\bibitem{Princeton}
Princeton University,
\emph{Problems of mathematics},
Princeton University Bicentennial Conferences, Series~2, Conference~2,
1947. Reprinted in P. Duren (ed.), \emph{A Century of Mathematics in
America}, Part~II, History of Mathematics \textbf{2}, American
Mathematical Society, Providence, RI, 1989, pp.~309--334,
especially p.~312.
\href{https://www.ams.org/publicoutreach/math-history/hmath2-princeton-2.pdf}{AMS reprint}.

\bibitem{Schofield}
A.~H. Schofield,
\emph{Artin's problem for skew field extensions},
Math. Proc. Cambridge Philos. Soc. \textbf{97} (1985), no.~1, 1--6.
\href{https://doi.org/10.1017/S0305004100062538}{doi:10.1017/S0305004100062538}.

\bibitem{SmithTate}
S.~P. Smith and J. Tate,
\emph{The center of the 3-dimensional and 4-dimensional Sklyanin algebras},
K-Theory \textbf{8} (1994), no.~1, 19--63.
\href{https://doi.org/10.1007/BF00962089}{doi:10.1007/BF00962089}.

\bibitem{Stratiform}
A.~H. Schofield,
\emph{Stratiform simple Artinian rings},
Proc. London Math. Soc. (3) \textbf{53} (1986), no.~2, 267--287.
\href{https://doi.org/10.1112/plms/s3-53.2.267}{doi:10.1112/plms/s3-53.2.267}.

\bibitem{TVdB}
J. Tate and M. Van den Bergh,
\emph{Homological properties of Sklyanin algebras},
Invent. Math. \textbf{124} (1996), no.~1--3, 619--647.
\href{https://doi.org/10.1007/s002220050065}{doi:10.1007/s002220050065}.

\bibitem{VdB}
M. Van den Bergh,
\emph{A relation between Hochschild homology and cohomology for
Gorenstein rings},
Proc. Amer. Math. Soc. \textbf{126} (1998), no.~5, 1345--1348.
\href{https://doi.org/10.1090/S0002-9939-98-04210-5}{doi:10.1090/S0002-9939-98-04210-5}.

\bibitem{Yekutieli}
A. Yekutieli,
\emph{The rigid dualizing complex of a universal enveloping algebra},
J. Pure Appl. Algebra \textbf{150} (2000), no.~1, 85--93.
\href{https://doi.org/10.1016/S0022-4049(99)00032-8}{doi:10.1016/S0022-4049(99)00032-8}.
\href{https://arxiv.org/abs/math/9810016}{arXiv:math/9810016}.

\bibitem{YZ}
A. Yekutieli and J.~J. Zhang,
\emph{Homological transcendence degree},
Proc. London Math. Soc. (3) \textbf{93} (2006), no.~1, 105--137.
\href{https://doi.org/10.1017/S0024611505015698}{doi:10.1017/S0024611505015698}.
\href{https://arxiv.org/abs/math/0412013}{arXiv:math/0412013}.

\end{thebibliography}
\end{document}